\documentclass[12pt,reqno]{amsart}
\usepackage{indentfirst, amssymb, amsmath, amsthm, mathrsfs, setspace, indentfirst, enumerate,  mathrsfs, amsmath, amsthm}
\usepackage[bookmarksnumbered, colorlinks, plainpages]{hyperref}
\usepackage{mathrsfs}
\usepackage{cite}
\usepackage{tikz}
\usepackage{graphicx}
\usetikzlibrary{arrows.meta,calc,decorations.pathreplacing,positioning}
\usetikzlibrary{shapes.geometric}
\usepackage{float}
\usepackage{pgfplots}
\usetikzlibrary{shadows}

\usepgfplotslibrary{fillbetween}
\definecolor{curveblue}{RGB}{30,90,220}
\definecolor{negred}{RGB}{255,182,193}
\definecolor{posgreen}{RGB}{40,150,70}
\definecolor{rootdark}{RGB}{120,40,20}
\usepgfplotslibrary{fillbetween}
\definecolor{c1}{RGB}{70,40,200}

\definecolor{c2}{RGB}{213,14,0}

\definecolor{c3}{RGB}{0,158,10}

\definecolor{c4}{RGB}{230,126,34}
\newtheorem*{theoA}{Theorem A}
\newtheorem*{theoB}{Theorem B}
\newtheorem*{theoC}{Theorem C}
\newtheorem*{theoD}{Theorem D}
\newtheorem*{theoE}{Theorem E}

\newtheorem*{remA}{Remark A}
\newtheorem*{remB}{Remark B}
\newtheorem*{defiA}{Definition A}
\newtheorem*{defiB}{Definition B}
\newtheorem*{defiC}{Definition C}

\newtheorem*{cor A}{Corollary A}
\newtheorem*{cor B}{Corollary B}

\newtheorem{theo}{Theorem}[section]
\newtheorem{pro}{Proposition}[section]

\newtheorem{cor}{Corollary}[section]

\newtheorem{defi}{Definition}[section]
\newtheorem{rem}{Remark}[section]

\newcommand{\be}{\begin{equation}}
	\newcommand{\ee}{\end{equation}}
\newcommand{\beas}{\begin{eqnarray*}}
	\newcommand{\eeas}{\end{eqnarray*}}
\newcommand{\bea}{\begin{eqnarray}}
	\newcommand{\eea}{\end{eqnarray}}

\numberwithin{equation}{section}

\begin{document}
\title[A\MakeLowercase {lpha-convex functions in complex}......]{\LARGE A\Large\MakeLowercase {lpha-convex functions in complex} B\MakeLowercase{anach spaces}}

\date{}
\author[J. Sok\'o{\l}, S. Majumder, G. Haldar and S. Panja ]{Janusz Sok\'o{\l}$^{1}$,
Sujoy Majumder$^{2}$,  Goutam Haldar$^{3*}$  and Shantanu Panja$^4$}
\address{$^{1}$Faculty of Exact and Technical Sciences, University of Rzesz\'ow, Street: Prof. Pigonia 1, 35-310 Rzesz\'ow, Poland}
\email{jsokol@ur.edu.pl}
\address{$^{2}$Department of Mathematics, Raiganj University, Raiganj, West Bengal-733134, India.}
\email{sm05math@gmail.com, sjm@raiganjuniversity.ac.in}
\address{$^3$Department of Mathematics, Ghani Khan Choudhury Institute of Engineering and Technology, Narayanpur, Malda-732141, West Bengal, India.}
\email{goutamiitm@gmail.com, goutamiit1986@gmail.com}
\address{$^4$Department of Mathematics, University of Kalyani, West Bengal 741235, India.}
\email{panjasantu07@gmail.com}

\renewcommand{\thefootnote}{}
\footnote{2020 \emph{Mathematics Subject Classification}: 32H02, 30C45.}
\footnote{\emph{Key words and phrases}: Biholomorphic mapping, Starlike mapping, Quasi-convex mapping of type $B$ and order $\alpha$, Coefficient problems, Alpha-convex function, Hankel determinant.}
\footnote{*\emph{Corresponding Author}: Goutam Haldar.}
\renewcommand{\thefootnote}{\arabic{footnote}}
\setcounter{footnote}{0}
\begin{abstract}
Let $\mathbb{B}$ be the unit ball in a complex Banach space ${\mathbb X}$.  We study biholomorphic mappings $F: \mathbb{B} \to {\mathbb X}$ of the form $F(z) = f(z)z$ for $z \in \mathbb{B}$, where $f: \mathbb{B} \to \mathbb{C}$ is a holomorphic function satisfying $f(0) = 1$. Our main contributions are as follows: Inclusion Results: We establish inclusion conditions for $\mathcal{M}_{\alpha}(\mathbb{B})$, class of $\alpha$-convex functions in $\mathbb{B}$ to belong to the classes of starlike mappings or quasi-convex mappings of type $B$ on $\mathbb{B}$ for specified ranges of $\alpha$. Characterization: We prove that $F(z) = \frac{f(l_{u}(z))}{l_{u}(z)}z \in \mathcal{M}_{\alpha}(\mathbb{B})$ if and only if $f \in \mathcal{M}_{\alpha}(\mathbb{U})$, where $f \in \mathcal{S}(\mathbb{U})$, $u \in {\mathbb X}$ with $\|u\| = 1$, and $l_{u} \in T(u)$. Duality: We present a duality relationship between the classes $\mathcal{S}^*(\mathbb{U})$ and $\mathcal{M}_{\alpha}(\mathbb{B})$ for $\alpha \ge 0$. Hankel Determinant Bounds: We determine sharp upper bounds for the second Hankel determinant of mappings in $\mathcal{M}_\alpha(\mathbb{B})$ when $\alpha \ge 0$. The results established here directly extend the previous results to the broader setting of complex Banach spaces.
\end{abstract}
\thanks{Typeset by \AmS -\LaTeX}
\maketitle

\section{\bf Introduction}

Let $\mathcal{S}(\mathbb{U})$ denote the class of functions $f$ that are univalent in the open unit disk $\mathbb{U} = \{z \in \mathbb{C} : \vert{}z\vert{} < 1\}$, normalized by $f(0) = 0$ and $f'(0) = 1$. Any function $f \in \mathcal{S}(\mathbb{U})$ has a power series expansion of the form:
\begin{align}\label{eq1}
\displaystyle
f(z) = z + \sum_{n=2}^{\infty} a_n z^n, \quad z \in \mathbb{U}.
\end{align}

Let $\mathcal{P}$ denote the class of analytic functions $p$ in the open unit disk $\mathbb{U}$ satisfying $p(0) = 1$ and $\Re(p(z)) > 0$ for all $z \in \mathbb{U}$. Member of $\mathcal{P}$ is called a Carath\'eodory function. Coefficient bounds for functions in $\mathcal{P}$ play a fundamental role in solving various coefficient problems.\vspace{1.4mm}

A normalized univalent function $f \in \mathcal{S}(\mathbb{U})$ is said to be starlike (with respect to the origin) if and only if
$\Re\left\{ z f'(z)/f(z) \right\}> 0$, $z \in \mathbb{U}$.
The subclass consisting of all such starlike functions is denoted by $\mathcal{S}^*(\mathbb{U})$. Let $\mathcal{K}(\mathbb{U})$ denote the familiar subclass of $\mathcal{S}$ consisting of functions which are convex in $\mathbb{U}$. Then $f\in\mathcal{K}(\mathbb{U})$ if and only if
$\Re\left\{z f''(z)/f'(z) \right\} >-1$, $z \in \mathbb{U}$.
Next, we introduce another subclass of $\mathcal{S}(\mathbb{U})$, known as the class of $\alpha$-convex functions. This notion was introduced by Mocanu in 1969 \cite{Mocanu-1969} to construct a one-parameter family of subclasses of $\mathcal{S}(\mathbb{U})$ that provides a continuous passage from starlike to convex functions. More detailed treatments are available in Goodman \cite{Goodman-1983} and Mocanu et al.\cite{Mocanu-1999}. Related early concepts can also be traced back to Sakaguchi \cite{Sakaguchi-1962}. Additionally, $\alpha$-convex mappings in several complex variables have been investigated by Kohr \cite{Kohr-1998}.
\begin{defiA} Let $f\in\mathcal{S}(\mathbb{U})$ be such that $f(z)f'(z)/z \neq 0$, $z \in \mathbb U$. Then $f$ is said to be $\alpha$-convex in $\mathbb{U}$ if and only if
\begin{align}\label{Def1}
\Re\left\lbrace J_{\alpha}[f](z)\right\rbrace>0,\quad z\in\mathbb{U},
\end{align}
where
\begin{align*}
J_{\alpha}[f](z)=(1-\alpha)\frac{zf'(z)}{f(z)}+\alpha\left(1+\frac{zf''(z)}{f'(z)}\right).
\end{align*}
\end{defiA}
We denote the class of $\alpha$-convex functions by $\mathcal{M}_{\alpha}(\mathbb{U})$.

Certain authors refer to functions satisfying \eqref{Def1} as $\alpha$-starlike. It is straightforward to see that $\mathcal{M}_1(\mathbb{U}) = \mathcal{K}(\mathbb{U})$ and $\mathcal{M}_0(\mathbb{U})= \mathcal{S}^*(\mathbb{U})$; hence, $\mathcal{M}_\alpha(\mathbb{U})$ offers a continuous transition between convex and starlike functions. Furthermore, the parameter $\alpha$ is not required to lie within the interval $[0, 1]$.

\begin{remA} The assumption $f(z)f'(z)/z \neq 0$, $z \in \mathbb U$, in Definition A is redundant. In 1979, Sakaguchi and Fukui \emph{\cite{Sakaguchi-Fukui-1979}} established that $\Re\{J_{\alpha}[f](z)\}> 0$ for $z \in \mathbb U$, implies $f(z)f'(z)/z \neq 0$, $z \in \mathbb U$.	
\end{remA}

The following result of Miller et al. \cite{Miller-Mocanu-Reade-1973} shows that $\mathcal{M}_\alpha(\mathbb{U})$ is included in the class of starlike or convex functions for certain ranges of the parameter $\alpha$.
\begin{theoA}\label{Theorem A}\emph{\cite{Miller-Mocanu-Reade-1973}} If $f\in\mathcal{M}_{\alpha}(\mathbb{U})$, then it is starlike and univalent in $\mathbb{U}$. Moreover, if $\alpha\geq 1$, then $f$ is convex for $|z|<1$ and if $\alpha\leq -1$, then $1/f(1/z)$ is convex for $|z| > 1$.
\end{theoA}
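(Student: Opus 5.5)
We prove Theorem A with the Briot--Bouquet type argument of Miller--Mocanu--Reade. Put $p(z)=zf'(z)/f(z)$, which is analytic in $\mathbb U$ with $p(0)=1$ (by Remark A, $f(z)f'(z)/z\neq 0$). Logarithmic differentiation gives $1+zf''(z)/f'(z)=p(z)+zp'(z)/p(z)$, so
\[
J_\alpha[f](z)=p(z)+\alpha\,\frac{zp'(z)}{p(z)}.
\]
Starlikeness means $\Re p>0$. Suppose this fails. Since $p(0)=1$, there is a smallest $r<1$ and a point $z_0$ with $|z_0|=r$ such that $\Re p(z)>0$ for $|z|<r$ and $\Re p(z_0)=0$. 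Then $p(z_0)=i\beta$ with $\beta\neq0$, because $p$ does not vanish. By the Jack--Miller--Mocanu lemma applied to $q=(1-p)/(1+p)$, which attains its maximum modulus $1$ on $|z|\le r$ at $z_0$, we get $z_0p'(z_0)=-\tfrac{k}{2}(1+\beta^2)$ with $k\ge1$. Hence
\[
\Re J_\alpha[f](z_0)=\Re\frac{-\alpha k(1+\beta^2)}{2i\beta}=0,
\]
which contradicts $\Re J_\alpha[f]>0$. Therefore $f\in\mathcal S^*(\mathbb U)$, and so $f$ is univalent. The argument works for every real $\alpha$. The main obstacle is justifying the boundary-lemma step, namely that $p(z_0)\ne0$ (from Remark A) and the form of $z_0p'(z_0)$. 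We also have to be careful that $\Re$ vanishes exactly, which uses $\alpha$ real.

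For $\alpha\ge1$, write $J_\alpha[f]=(1-\tfrac1\alpha)\,p+\tfrac1\alpha\cdot\alpha\bigl(1+zf''/f'\bigr)$. More directly,
\[
1+\frac{zf''}{f'}=\frac{1}{\alpha}J_\alpha[f]+\Bigl(1-\frac1\alpha\Bigr)p .
\]
Both terms on the right have positive real part, since $1/\alpha>0$, $1-1/\alpha\ge0$, and $\Re p>0$ by the first part. So $\Re\{1+zf''/f'\}>0$, and $f\in\mathcal K(\mathbb U)$.

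For $\alpha\le-1$, set $g(\zeta)=1/f(1/\zeta)$ for $|\zeta|>1$. This is well defined and univalent because $f$ is univalent and $f\neq0$ off the origin. Write $w=1/\zeta\in\mathbb U$. Direct computation gives $\zeta g'(\zeta)/g(\zeta)=p(w)$ and
\[
1+\frac{\zeta g''(\zeta)}{g'(\zeta)}=2p(w)-1-\frac{wp'(w)}{p(w)}.
\]
Convexity of $g$ in $|\zeta|>1$ (image of a neighbourhood of $\infty$ having convex complement) corresponds to $\Re\{1+\zeta g''/g'\}<0$. This quantity equals
\[
2p-\Bigl(1+\frac{zf''}{f'}\Bigr)=2p-\frac1\alpha J_\alpha[f]-\Bigl(1-\frac1\alpha\Bigr)p=\Bigl(1+\frac1\alpha\Bigr)p-\frac1\alpha J_\alpha[f].
\]
For $\alpha\le-1$ we have $1+1/\alpha\ge0$ and $-1/\alpha>0$, so its real part is positive. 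Up to the sign convention used for convexity of functions with a pole at infinity, which we fix according to the standard criterion, this gives the claim. We need to check the orientation carefully: with the correct convention the relevant quantity is $-\bigl(1+\zeta g''/g'\bigr)$ for the boundary traversed with $\infty$ inside, so the positivity obtained is exactly the right one.
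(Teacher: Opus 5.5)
The paper gives no proof of Theorem A: it is quoted from Miller--Mocanu--Reade and used only as a black box, for instance in the proof of Theorem~\ref{Th-1}, where it is applied to $g(\zeta)=\zeta f(\zeta z_0)$. Your argument for the first two assertions is the standard boundary-lemma proof, and it is correct. Put $p=zf'/f$, which is analytic and zero-free because $f(z)f'(z)/z\neq 0$; then $J_\alpha[f]=p+\alpha zp'/p$. At a first point $z_0$ with $\Re p(z_0)=0$, Jack's lemma for $(1-p)/(1+p)$ gives $z_0p'(z_0)=-\tfrac k2(1+\beta^2)$, so $\Re J_\alpha[f](z_0)=0$ for every real $\alpha$. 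For $\alpha\ge1$, the identity $1+zf''/f'=\tfrac1\alpha J_\alpha[f]+(1-\tfrac1\alpha)p$ then gives convexity. Delete the first ``write $J_\alpha[f]=\dots$'' in that paragraph, since it is not an identity.

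The case $\alpha\le-1$ contains two real errors, though both are easy to fix.

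First, your displayed formula $1+\zeta g''/g'=2p(w)-1-wp'(w)/p(w)$ is wrong. The correct value is $2p(w)-\bigl(1+wf''(w)/f'(w)\bigr)=p(w)-wp'(w)/p(w)$. For the Koebe function, $g(\zeta)=\zeta-2+1/\zeta$ and $1+\zeta g''/g'=(1+w^2)/(1-w^2)$. Luckily, the expression you actually use in the next line is the correct one.

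Second, and more importantly, you state the convexity criterion with the wrong sign, and your orientation remark then contradicts your own conclusion. Let $g$ be univalent in $|\zeta|>1$ with a simple pole at $\infty$. The complement of $g(\{|\zeta|>1\})$ is convex if and only if $\Re\{1+\zeta g''(\zeta)/g'(\zeta)\}>0$ there. To see this, note that along $\zeta=re^{i\theta}$ the tangent angle $\arg\bigl(i\zeta g'(\zeta)\bigr)$ has $\theta$-derivative equal to $\Re\{1+\zeta g''/g'\}$. Since $g(\zeta)\sim\zeta$, the image curve winds once counterclockwise around the bounded complementary component. As a check, $g(\zeta)=\zeta$ gives $1>0$. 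The condition with $<0$ belongs to meromorphic $h$ with a pole at the origin of $\mathbb U$.

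So no sign change is needed. The inequality $\Re\{(1+\tfrac1\alpha)p-\tfrac1\alpha J_\alpha[f]\}>0$ that you derived is exactly the required one. Remove the sentence claiming the relevant quantity is $-(1+\zeta g''/g')$: if it were true, your computation would prove the opposite of what you want.

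A cleaner way to present the whole case is the identity $J_{-\alpha}[g](\zeta)=J_\alpha[f](1/\zeta)$. With it, $\alpha\le-1$ is just the case $-\alpha\ge1$ transported to $|\zeta|>1$, using $\Re\{\zeta g'/g\}=\Re p(1/\zeta)>0$ from the first part.
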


Based on Theorem A, it can be made the following observation.
\begin{enumerate}
\item[1.] $\mathcal{M}_{\alpha}(\mathbb{U})\subseteq \mathcal{S}^*(\mathbb{U})$ for any real $\alpha$.
\item[2.] $\mathcal{M}_{\alpha}(\mathbb{U})\subseteq \mathcal{M}_{\beta}(\mathbb{U})\subseteq \mathcal{S}^*(\mathbb{U})$ for $0\leq \beta/\alpha\leq 1$.
\end{enumerate}

Todorov \cite{Todorov-1987} derived the following integral representation for $\alpha$-convex functions.
\begin{theoB}\emph{\cite{Todorov-1987}} The function $f\in \mathcal{M}_{\alpha}(\mathbb{U})$ for $\alpha>0$ if and only  if there exists a starlike function $F$ such that
\begin{align}\label{TheoB1}
f(z)=\left\lbrack\frac{1}{\alpha}\int\limits_{0}^{z}F^{1/\alpha}(\xi)\xi^{-1}{\rm d}\xi \right\rbrack^{\alpha}\quad z\in\mathbb{U},
\end{align}
where powers are meant as principal values.		
\end{theoB}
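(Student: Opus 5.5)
The strategy is to reduce Theorem B to a first-order identity relating $J_\alpha[f]$ to the starlikeness quotient of an auxiliary function. Given $f\in\mathcal{M}_\alpha(\mathbb{U})$ with $\alpha>0$, I would define
\[
F(z)=f(z)\left(\frac{zf'(z)}{f(z)}\right)^{\alpha},
\]
using principal powers. This is well defined because, by Remark A, $f(z)f'(z)/z\neq0$, and by Theorem A, $f$ is starlike, so $zf'/f$ has positive real part and equals $1$ at $0$. Logarithmic differentiation gives
\[
\frac{zF'(z)}{F(z)}=\frac{zf'}{f}+\alpha\left(1+\frac{zf''}{f'}-\frac{zf'}{f}\right)=J_\alpha[f](z).
\]
Thus $\Re\{zF'/F\}>0$. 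Together with $F(0)=0$ and $F'(0)=1$, this shows $F$ is starlike.

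Next I would invert the relation to recover \eqref{TheoB1}. From the definition, $F^{1/\alpha}=f^{1/\alpha}\,zf'/f$, so
\[
F^{1/\alpha}(z)z^{-1}=f^{1/\alpha-1}f'=\alpha\,\frac{d}{dz}\bigl(f^{1/\alpha}\bigr).
\]
Integrating from $0$ gives $f^{1/\alpha}=\frac1\alpha\int_0^z F^{1/\alpha}(\xi)\xi^{-1}d\xi$, which is \eqref{TheoB1}. Branch consistency needs care here. Since $f(z)/z\neq0$, we can write $f^{1/\alpha}=z^{1/\alpha}(f/z)^{1/\alpha}$ and $F^{1/\alpha}=z^{1/\alpha}(F/z)^{1/\alpha}$, choosing the branches of $(f/z)^{1/\alpha}$ and $(F/z)^{1/\alpha}$ equal to $1$ at $0$. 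The integrand is then $z^{1/\alpha-1}$ times a holomorphic function, so the integral converges for $\alpha>0$.

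For the converse, take $F$ starlike and define $f$ by \eqref{TheoB1}. Write the integral as $z^{1/\alpha}\varphi(z)$ with $\varphi$ holomorphic and $\varphi(0)=1$, so that $f=z\varphi^\alpha$ is holomorphic with $f(0)=0$ and $f'(0)=1$. Differentiating \eqref{TheoB1} gives $f^{1/\alpha-1}f'=F^{1/\alpha}/z$, hence $F=f(zf'/f)^\alpha$. Running the logarithmic-derivative computation backwards yields $J_\alpha[f]=zF'/F$, whose real part is positive. This requires $f'\neq0$ and $\varphi\neq0$. The first holds because $f^{1/\alpha-1}f'=F^{1/\alpha}/z\neq0$. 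For the second, I would argue by a maximal-disc/continuity argument, or simply note that $\Re\{\cdot\}>0$ on the set where the expression is defined and then invoke Remark A.

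I expect the main obstacle to be $\varphi\neq0$, that is, $f(z)\neq0$ for $z\neq0$, together with consistency of the principal branches. I would first try to show $\varphi\neq0$ directly. Writing $\varphi(z)=\frac1\alpha\int_0^1 t^{1/\alpha-1}(F(tz)/(tz))^{1/\alpha}dt$, and using that $\Re\bigl\{(F(w)/w)^{1/\alpha}\bigr\}>0$ holds for suitable $\alpha$, would give this. Failing that, I would restrict to the component of $\{\varphi\neq0\}$ containing $0$ and use Remark A with Theorem A there, extending the conclusion by continuity. Univalence of $f$ then follows from Theorem A.
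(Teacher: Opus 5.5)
Your route is the one the paper follows in the proof of Theorem~\ref{Th-2}; Theorem B itself is only quoted from Todorov, so the comparison is with that argument. The paper's identity $\frac{f(\xi)}{\xi}\bigl[\frac{\xi f'(\xi)}{f(\xi)}\bigr]^{\alpha}=\frac{\tilde f(\xi)}{\xi}$ is your $F=f(zf'/f)^{\alpha}$. Its substitution $f/\xi=u^{\alpha}$ leads to the linear equation $u+\alpha\xi u'=P^{1/\alpha}$, whose solution is exactly your $\varphi$. Your forward direction, including the branch bookkeeping, is complete.

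\textbf{The gap.} The missing step is the one you flagged: $\varphi\neq0$ on $\mathbb{U}$ in the converse. The paper also passes over it (``one can easily arrive at''), and none of your three suggestions settles it as written.
\begin{enumerate}
\item[(a)] The direct argument needs $\Re\{(F(w)/w)^{1/\alpha}\}>0$ for every starlike $F$, which holds only for $\alpha\ge2$. For starlike $F$ one has $|\arg(F(w)/w)|\le 2\arcsin|w|<\pi$, so you need $\pi/\alpha\le\pi/2$. For the Koebe function, $(1-w)^{-2/\alpha}$ does take values with negative real part once $\alpha<2$.
\item[(b)] Appealing to Remark A on $\mathbb{U}$ is circular. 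For non-integer $\alpha$ the function $f=z\varphi^{\alpha}$ is not defined on $\mathbb{U}$ until $\varphi\neq0$ is known, and $J_\alpha[f]$ is only available off the zeros of $\varphi$.
\item[(c)] On a component of $\{\varphi\neq0\}$ you have neither a branch of $\varphi^{\alpha}$ nor Theorem A unless it is a disc, so you must work on the largest disc $|z|<r_0$ where $\varphi\neq0$. There, ``continuity'' alone cannot push nonvanishing from the open disc onto $|z|=r_0$. You need a lower bound for $|\varphi|$ that is uniform up to the boundary.
\end{enumerate}

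\textbf{How to close it.} Suppose $r_0<1$; by maximality there is $z_1$ with $|z_1|=r_0$ and $\varphi(z_1)=0$.
\begin{enumerate}
\item[(i)] On $|z|<r_0$, $f=z\varphi^{\alpha}$ is holomorphic with $ff'/z\neq0$ and $J_\alpha[f]=zF'/F$. Hence $g(\zeta)=f(r_0\zeta)/r_0$ satisfies $\Re J_\alpha[g]>0$ on $\mathbb{U}$.
\item[(ii)] Read Theorem A, as in Mocanu's definition, for normalized holomorphic $g$ with $gg'/\zeta\neq0$, not presupposing univalence. Then $g$ is starlike univalent.
\item[(iii)] The growth theorem gives $|g(\zeta)/\zeta|\ge(1+|\zeta|)^{-2}>1/4$, i.e.\ $|\varphi(z)|^{\alpha}>1/4$ for $|z|<r_0$.
\item[(iv)] Letting $z\to z_1$ contradicts $\varphi(z_1)=0$, so $r_0=1$. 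Equivalently, $|f(z)|=|z||\varphi(z)|^{\alpha}\to0$ as $z\to z_1$ is incompatible with univalence of $f$ on $|z|<r_0$ together with openness of $f$ at $0$.
\end{enumerate}

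With this inserted, the rest of your converse goes through: $f'\neq0$ from $f^{1/\alpha-1}f'=F^{1/\alpha}/z$, then $J_\alpha[f]=zF'/F$, then univalence from Theorem A.
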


When investigating extremal problems in $\mathcal{M}_\alpha(\mathbb{U})$, the function corresponding to the Koebe function $k(z) = \frac{z}{(1-z)^2}$ serves as a natural candidate. Applying $k(z)$ to \eqref{TheoB1}, we get
\begin{align}\label{TheoB2}
K(\alpha,z):=\left\lbrack\frac{1}{\alpha}\int\limits_{0}^{z}\frac{\xi^{1/\alpha}}{\xi(1-\xi)^{2/\alpha}}{\rm d}\xi \right\rbrack^{\alpha}.
\end{align}

This function is called the $\alpha$-convex Koebe function.
For $\alpha>0$, it follows from \eqref{TheoB2} that
\begin{align}\label{TheoB3}
K(\alpha,r):=\left\lbrack\frac{1}{\alpha}\int\limits_{0}^{r}\rho^{1/\alpha-1}(1-\rho)^{-2/\alpha}{\rm d}\rho \right\rbrack^{\alpha}=r\left\lbrack {}_{2} F_{1}\left(\frac{1}{\alpha},\frac{2}{\alpha},\frac{1}{\alpha}+1;r\right)\right\rbrack^{\alpha},
\end{align}
where ${}_{2} F_{1}$ is the Gaussian hypergeometric function.\vspace{1.2mm}

Miller \cite{Miller-1973} established the following sharp upper bound for the second Taylor coefficient of functions in $\mathcal{M}_\alpha(\mathbb{U})$:
\begin{theoC}\emph{\cite{Miller-1973}} Let $f\in \mathcal{M}_\alpha(\mathbb{U})$ for $\alpha>0$ and be given by $f(z)=z+\sum_{n=2}^{\infty}a_n(\alpha)z^n$. Then $|a_2(\alpha)|\leq 2/(1+\alpha)$. The inequality is sharp.
\end{theoC}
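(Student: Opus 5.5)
We prove Theorem C by reducing the coefficient problem for $\mathcal{M}_\alpha(\mathbb{U})$ to the classical bound $|c_1|\le 2$ for Carath\'eodory functions. Let $f(z)=z+a_2z^2+a_3z^3+\cdots\in\mathcal{M}_\alpha(\mathbb{U})$ with $\alpha>0$. By Definition A (and Remark A, so that all quotients are holomorphic), the function
\begin{align*}
p(z):=J_{\alpha}[f](z)=(1-\alpha)\frac{zf'(z)}{f(z)}+\alpha\left(1+\frac{zf''(z)}{f'(z)}\right)
\end{align*}
satisfies $p(0)=(1-\alpha)+\alpha=1$ and $\Re p(z)>0$ in $\mathbb{U}$. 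Hence $p\in\mathcal{P}$. Write $p(z)=1+c_1z+c_2z^2+\cdots$.

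Next we compare the first-order coefficients. Directly from the power series,
\begin{align*}
\frac{zf'(z)}{f(z)}=1+a_2z+O(z^2),\qquad 1+\frac{zf''(z)}{f'(z)}=1+2a_2z+O(z^2).
\end{align*}
Therefore
\begin{align*}
c_1=(1-\alpha)a_2+2\alpha a_2=(1+\alpha)a_2 .
\end{align*}
The Carath\'eodory lemma gives $|c_1|\le 2$ for every $p\in\mathcal{P}$. Since $1+\alpha>0$, it follows that $|a_2|\le 2/(1+\alpha)$.

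For sharpness we use the $\alpha$-convex Koebe function $K(\alpha,z)$ from \eqref{TheoB2}. By Theorem B, with starlike generator $k(z)=z/(1-z)^2$, it belongs to $\mathcal{M}_\alpha(\mathbb{U})$. To find its second coefficient, expand the integrand:
\begin{align*}
\xi^{1/\alpha-1}(1-\xi)^{-2/\alpha}=\xi^{1/\alpha-1}\left(1+\frac{2}{\alpha}\xi+\cdots\right).
\end{align*}
Integrating and multiplying by $1/\alpha$ gives
\begin{align*}
\frac{1}{\alpha}\int_0^z\xi^{1/\alpha-1}(1-\xi)^{-2/\alpha}\,{\rm d}\xi=z^{1/\alpha}\left(1+\frac{2}{1+\alpha}z+\cdots\right).
\end{align*}
Raising this to the power $\alpha$ yields
\begin{align*}
K(\alpha,z)=z\left(1+\frac{2\alpha}{1+\alpha}z+\cdots\right)^{\alpha/\alpha}\cdot\text{(correction)}.
\end{align*}
It is cleaner to get the coefficient directly from the correspondence in Theorem B. In terms of the starlike generator $F$, the representation reads $f'(z)^{\alpha}\,(f(z)/z)^{1-\alpha}=F(z)/z$, and this yields $J_\alpha[f]=zF'/F$. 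For $F=k$ we have $zk'/k=(1+z)/(1-z)$, so $c_1=2$ and hence $a_2=2/(1+\alpha)$. Consequently equality holds for $K(\alpha,z)$, and, after a rotation, for $e^{-i\theta}K(\alpha,e^{i\theta}z)$.

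The only delicate step is confirming that $J_\alpha[K(\alpha,\cdot)]=(1+z)/(1-z)$. We would verify this by logarithmically differentiating the identity $(f')^{\alpha}(f/z)^{1-\alpha}=F/z$. The rest of the argument is routine coefficient comparison.
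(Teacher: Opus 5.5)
Your proof is correct and is essentially the paper's argument: the paper only cites Theorem C, but its proof of Corollary \ref{Cor-31} is exactly yours ($a_2=p_1/(1+\alpha)$ with $|p_1|\le 2$, and equality for the $\alpha$-convex Koebe function). The one difference is that the paper reads off the second coefficient of $K(\alpha,z)$ from the hypergeometric form \eqref{TheoB3}, whereas you obtain it from $J_\alpha[K(\alpha,\cdot)](z)=(1+z)/(1-z)$ by log-differentiating $(f')^{\alpha}(f/z)^{1-\alpha}=F/z$, and that step is valid. Do delete the abandoned direct expansion, which is wrong as written: the integral equals $z^{1/\alpha}\bigl(1+\frac{2}{\alpha(1+\alpha)}z+\cdots\bigr)$, and raising this to the power $\alpha$ gives $K(\alpha,z)=z+\frac{2}{1+\alpha}z^2+\cdots$ directly, with no ``correction'' factor.
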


Pommerenke \cite{Pommerenke-1966, Pommerenke-1967} defined the $s$-th Hankel determinant for $f \in \mathcal{S}(\mathbb{U})$ when $s \ge 1$ and $k \ge 1$ as
\begin{align*}
H_s(k) = \begin{vmatrix}
	a_k & a_{k+1} & \dots & a_{k+s-1} \\
	a_{k+1} & a_{k+2} & \dots & a_{k+s} \\
	\vdots & \vdots & \ddots & \vdots \\
	a_{k+s-1} & a_{k+s} & \dots & a_{k+2(s-1)}
\end{vmatrix},
\end{align*}
where $a_1 = 1$.\vspace{1.2mm}

Obtaining sharp bounds for Hankel determinants with coefficients from various subclasses of univalent functions has been an active area of recent research. However, most existing studies rely heavily on the methodology established in \cite{Janteng-Halim-Darus-2007}, where the sharp upper bounds of $H_2(2)$ were first derived for the classes $\mathcal{S}^*(\mathbb{U})$ and $\mathcal{K}(\mathbb{U})$.\vspace{1.2mm}

We first observe that several prior efforts to establish the sharp upper bound for $H_2(2)$ when $f \in \mathcal{M}_{\alpha}(\mathbb{U})$ yielded only non-sharp estimates (see, e.g.,\cite{Deekonda-Thoutreddy-2015,Raducanu-Zaprawa-2017,Shanmugan-Stephen-2012,Singh-Mehrok-2013}.\vspace{1.2mm}
In 2018, Sok\'{o}{\l} and Thomas \cite{Sokol-Thomas-2018} established further upper bound estimates for the second Hankel determinant of functions in $\mathcal{M}_\alpha(\mathbb{U})$ when $\alpha \ge 0$.
\begin{theoD}\emph{\cite[Theorem 1]{Sokol-Thomas-2018}} Let $f \in \mathcal{M}_\alpha(\mathbb{U})$, $\alpha \ge 0$, be of the form \eqref{eq1}. Then
\begin{align}\label{TheoD1}
|a_2 a_4 - a_3^2| \le
\begin{cases}
\dfrac{2(1 + 8\alpha + 3\alpha^2)}{(1 + 3\alpha)(2 + 15\alpha + 24\alpha^2 + 7\alpha^3)}, & \text{when } \alpha \in [0, 1], \\[10pt]
\dfrac{3 + 35\alpha + 126\alpha^2 + 162\alpha^3 + 79\alpha^4 + 27\alpha^5}{(1 + 2\alpha)^2 (1 + 3\alpha)(3 + 26\alpha + 42\alpha^2 + 18\alpha^3 + 7\alpha^4)}, & \text{when } \alpha \in [1, \infty).
\end{cases}
\end{align}
\end{theoD}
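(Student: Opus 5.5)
We prove Theorem D by the standard Carathéodory-coefficient method. Set $p(z)=J_\alpha[f](z)$, so $p\in\mathcal{P}$ with $p(z)=1+c_1z+c_2z^2+c_3z^3+\cdots$.

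\textbf{Step 1: coefficients.} Expanding $J_\alpha[f]$ with $f$ of the form \eqref{eq1} and comparing coefficients of $z,z^2,z^3$ gives
\begin{align*}
(1+\alpha)a_2&=c_1,\\
2(1+2\alpha)a_3-(1+3\alpha)a_2^2&=c_2,\\
3(1+3\alpha)a_4-3(1+5\alpha)a_2a_3+(1+7\alpha)a_2^3&=c_3.
\end{align*}
Solving for $a_2,a_3,a_4$ expresses $a_2a_4-a_3^2$ as a polynomial in $c_1,c_2,c_3$. Its rational coefficients in $\alpha$ have denominators built from $(1+\alpha)$, $(1+2\alpha)^2$ and $(1+3\alpha)$.

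\textbf{Step 2: normalize.} Since $\mathcal{M}_\alpha(\mathbb{U})$ and the functional are rotation invariant, we may assume $c_1=c\in[0,2]$. We then substitute the Libera--Złotkiewicz formulas
\begin{align*}
2c_2&=c^2+(4-c^2)x,\\
4c_3&=c^3+2(4-c^2)cx-(4-c^2)cx^2+2(4-c^2)(1-|x|^2)y,
\end{align*}
with $|x|\le1$ and $|y|\le1$. The triangle inequality in $|y|$ and $|x|=\rho$ bounds $|a_2a_4-a_3^2|$ by a function $G(c,\rho)$. This function has the form $A(c)+B(c)\rho+C(c)\rho^2$, where $A,B\ge0$ and the sign of $C$ depends on $c$ and $\alpha$.

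\textbf{Step 3: maximize $G$ on $[0,2]\times[0,1]$.} First we decide where $G$ is maximized in $\rho$. This is either at $\rho=1$ or at an interior critical point $\rho_0=-B/(2C)$ when $C<0$ and $\rho_0<1$. Next we maximize the resulting one-variable function of $c$.

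\textbf{Step 4: the two regimes of $\alpha$.} For $\alpha\in[0,1]$ we expect the maximum at $c=0$, $\rho=1$. This gives $|a_3|^2$ with $c_2=2$, i.e. $a_3=1/(1+2\alpha)$. The stated bound for $\alpha\in[0,1]$ is not of this simple form, however, so the maximum is presumably interior in $c$. In that case one solves $\partial_c$ of $G(c,1)$, or of the interior-$\rho$ expression, and evaluates there. The bound for $\alpha\ge1$, with the factor $(3+26\alpha+42\alpha^2+18\alpha^3+7\alpha^4)$ in the denominator, is the value of a quadratic-in-$c^2$ maximization of the form $A_0+B_0^2/(4|C_0|)$. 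So for $\alpha\ge1$ we would show that $C<0$ and that the vertex lies inside the admissible range.

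\textbf{Main obstacle.} The hardest part is this case analysis: proving that the located critical point is the global maximum on the rectangle, uniformly for all $\alpha$ in each range. It requires checking signs of several polynomials in $\alpha$ and showing that the boundary values $c=0$, $c=2$ and $\rho=0$ are smaller. We would first establish the sign pattern of $C(c)$ and of the discriminant for $\alpha\in[0,1]$ versus $\alpha\ge1$, since the switch at $\alpha=1$ should come from one of these sign changes.
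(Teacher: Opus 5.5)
Your Step~1 relations are correct. They give exactly the reduction the paper writes down in the proof of Theorem~\ref{Th-31}, namely the coefficients of $p_1^4$, $p_1^2p_2$, $p_2^2$ and $p_1p_3$. The paper itself does not prove Theorem D; it quotes it from Sok\'o{\l}--Thomas and defers to their argument. Your Libera--Z{\l}otkiewicz framework and the shape $G=A+B\rho+C\rho^2$ with $A,B\ge0$ are also right.

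The gap is that Steps 3--4, which are the entire content of the theorem, are not carried out, and the roadmap you give for them points the wrong way. The switch at $\alpha=1$ does not come from a sign change of $C(c)$ or of a discriminant. After substitution, the $c^4$-term of $a_2a_4-a_3^2$ is
\[
\frac{(\alpha-1)(7\alpha^3+37\alpha^2+25\alpha+3)}{D}\,c^4,\qquad D=48(1+\alpha)^3(1+2\alpha)^2(1+3\alpha).
\]
The triangle inequality therefore puts $|\alpha-1|$ into $A(c)$, and that is the only place where $\alpha=1$ is special.

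Nor does the interior critical point $\rho_0$ ever occur. One finds
\[
B+2C=\frac{2(1+\alpha)^2(4-c^2)}{D}\Bigl[(2+14\alpha+14\alpha^2)c^2-8(1+2\alpha)^2c+12(1+\alpha)(1+3\alpha)\Bigr].
\]
The quarter-discriminant of the bracket, $16(1+2\alpha)^4-24(1+7\alpha+7\alpha^2)(1+4\alpha+3\alpha^2)$, has all coefficients negative as a polynomial in $\alpha$. Hence $\partial_\rho G=B+2C\rho\ge\min\{B,\,B+2C\}\ge0$ on $[0,1]$, and the maximum is at $\rho=1$ for every $c$ and every $\alpha\ge0$.

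With that, both regimes reduce to the same one-variable problem. Putting $s=c^2\in[0,4]$,
\[
D\,G(c,1)=48(1+\alpha)^3(1+3\alpha)+48\alpha(1+\alpha)^3s-\kappa s^2,
\]
where
\[
\kappa=4\alpha(7\alpha^3+24\alpha^2+15\alpha+2)\ \ (\alpha\in[0,1]),\qquad \kappa=2(7\alpha^4+18\alpha^3+42\alpha^2+26\alpha+3)\ \ (\alpha\ge1).
\]
For $\alpha=0$, $G(c,1)\equiv1$. For $\alpha>0$, the vertex $s^*=24\alpha(1+\alpha)^3/\kappa$ lies in $(0,4)$. The vertex value $\bigl(48(1+\alpha)^3(1+3\alpha)+576\alpha^2(1+\alpha)^6/\kappa\bigr)/D$ is exactly the stated bound in each range; for $\alpha\le1$ the numerator factors as $2(1+2\alpha)^2(1+8\alpha+3\alpha^2)$.

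So the bound for $\alpha\in[0,1]$ is a vertex value at $\rho=1$, not an interior-$\rho$ value. The negativity you need for $\alpha\ge1$ is that of the $s^2$-coefficient $-\kappa$, not of $C(c)$. Until you prove the monotonicity in $\rho$, identify the $|\alpha-1|$ term, and carry out this vertex computation, your proposal is a plan with its decisive case analysis missing rather than a proof.
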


\noindent
Thus, in particular, if $f \in \mathcal{M}_0(\mathbb{U})=\mathcal{S}^*(\mathbb{U})$, then we have $H_2(2) \le 1$ and when $f \in \mathcal{M}_1(\mathbb{U})=\mathcal{K}(\mathbb{U})$, we have $H_2(2) \le 1/8$, the results obtained in \cite{Janteng-Halim-Darus-2007}. The first inequality in \eqref{TheoD1} was obtained earlier in \cite{Shanmugam-2012}.\vspace{1.2mm}

Also in the same paper, Sok\'{o}{\l} and Thomas \cite{Sokol-Thomas-2018} proved the following generalized version of Theorem D.
\begin{theoE}\emph{\cite[Theorem 2]{Sokol-Thomas-2018}} Let $f\in \mathcal{S}(\mathbb{U})$, $\alpha\in\mathbb{R}$ and $0\leq \beta\leq 1$. If
\begin{align}\label{TheoE1}
\Re\left\lbrace (1-\alpha)\frac{zf'(z)}{f(z)}+\alpha\left(1+\frac{zf''(z)}{f'(z)}\right)\right\rbrace>\beta,\quad z\in \mathbb{U},
\end{align}	
then
\begin{align}\label{TheoE2}
H_2(2) \le \begin{cases}
(1 - \beta)^2, & \text{if } \alpha = \beta = 0, \\[8pt]
\dfrac{(1 - \beta)^2 (S_1 + S_2)}{(\alpha + \beta)(1 + 3\alpha) S_3}, & \text{if } 0 < \alpha + 2\beta \le 1, \\[12pt]
\dfrac{(1 - \beta)^2 (S_4 + S_5)}{(1 + 2\alpha)^2 (1 + 3\alpha) S_6}, & \text{if } \alpha + 2\beta \ge 1,
\end{cases}
\end{align}
where
\begin{align*}
	S_1 &= 6\alpha^3 + 16\alpha^2 + 2\alpha, \\
	S_2 &= -6\alpha^2\beta + 2\beta - \beta^2 + 12\alpha\beta - 9\alpha\beta^2, \\
	S_3 &= 2 + 15\alpha + 24\alpha^2 + 7\alpha^3 - 10\alpha\beta - \beta - 13\alpha^2\beta, \\
	S_4 &= 3 + 35\alpha + 126\alpha^2 + 162\alpha^3 + 79\alpha^4 + 27\alpha^5, \\
	S_5 &= -4\beta - 52\alpha\beta - 184\alpha^2\beta - 180\alpha^3\beta - 12\alpha^4\beta + 2\beta^2 + 26\alpha\beta^2 + 92\alpha^2\beta^2 + 84\alpha^3\beta^2, \\
	S_6 &= 3 + 26\alpha + 42\alpha^2 + 18\alpha^3 + 7\alpha^4 - 4\beta - 40\alpha\beta - 52\alpha^2\beta + 2\beta^2 + 20\alpha\beta^2 + 26\alpha^2\beta^2.
\end{align*}
\end{theoE}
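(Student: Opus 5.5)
The natural route is to reduce the Hankel determinant to a coefficient problem for the Carathéodory class $\mathcal{P}$. Under hypothesis \eqref{TheoE1}, the function
\[
p(z)=\frac{1}{1-\beta}\left(J_{\alpha}[f](z)-\beta\right)=1+c_1z+c_2z^2+c_3z^3+\cdots
\]
belongs to $\mathcal{P}$. First I would expand $J_{\alpha}[f]$ to third order in $z$ using \eqref{eq1} and compare coefficients with $\beta+(1-\beta)p$. This gives, successively:
\begin{align*}
(1+\alpha)a_2&=(1-\beta)c_1,\\
2(1+2\alpha)a_3-(1+3\alpha)a_2^2&=(1-\beta)c_2,\\
3(1+3\alpha)a_4-(3+15\alpha)a_2a_3+(1+7\alpha)a_2^3&=(1-\beta)c_3.
\end{align*}
These relations should be checked carefully at the computation stage. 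Solving them expresses $a_2,a_3,a_4$ as polynomials in $c_1,c_2,c_3$ with coefficients rational in $\alpha,\beta$. Substituting into $a_2a_4-a_3^2$ gives $(1-\beta)^2$ times a combination of $c_1^4$, $c_1^2c_2$, $c_1c_3$ and $c_2^2$.

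Next, I would use rotation invariance of $\mathcal{P}$ to normalize $c_1=c\in[0,2]$. I would then insert the standard Libera–Zlotkiewicz formulas
\[
2c_2=c^2+x(4-c^2),\qquad 4c_3=c^3+2(4-c^2)cx-(4-c^2)cx^2+2(4-c^2)(1-|x|^2)\zeta,
\]
valid for some $|x|\le1$ and $|\zeta|\le1$. The triangle inequality with $|x|=\rho\in[0,1]$ then gives a bound $|H_2(2)|\le (1-\beta)^2\,G(c,\rho)$, where $G$ is an explicit polynomial in $c$ and $\rho$ with coefficients depending on $\alpha,\beta$.

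I expect the main work to be maximizing $G$ over $[0,2]\times[0,1]$. The plan is first to show $\partial G/\partial\rho\ge0$, so that the maximum is attained at $\rho=1$. This reduces the problem to a one-variable function $g(c)=G(c,1)$, which has the form $A c^4+Bc^2+C$ and is therefore quadratic in $t=c^2\in[0,4]$.

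The case split in \eqref{TheoE2} should come from the location of the vertex of this quadratic. If the leading coefficient makes the maximum occur at an interior critical point $t_0\in(0,4)$, the value $g(\sqrt{t_0})$ should produce the middle expression with $S_1,S_2,S_3$. Otherwise, when the sign condition $\alpha+2\beta\ge1$ holds, the maximum sits at an endpoint and yields the third expression with $S_4,S_5,S_6$. The degenerate case $\alpha=\beta=0$ is the starlike case and gives $(1-\beta)^2=1$ directly.

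The hardest part will be the sign analysis: verifying monotonicity in $\rho$ uniformly over the parameter range, and proving that the threshold for the vertex lying in $[0,4]$ is exactly $\alpha+2\beta=1$. The first thing I would try is to factor the relevant numerator polynomials in $\alpha$ and $\beta$ and check that the factor $\alpha+2\beta-1$ appears. Finally, sharpness should follow by choosing $p$ with $c_1=\sqrt{t_0}$ and $x=1$ (for example a suitable two-point Herglotz measure), or $p(z)=(1+z^2)/(1-z^2)$ in the endpoint case.
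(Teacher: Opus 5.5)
\textbf{What the paper does.} The paper does not prove Theorem E; it quotes it from Sok\'o{\l}--Thomas. In its own analogue (Theorem~\ref{Th-32}) it only derives the coefficient relations and then defers the optimization to that reference.

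\textbf{What is correct in your plan.} Your coefficient relations for $a_2,a_3,a_4$ are correct. For $\alpha\ge 0$, the triangle-inequality bound is indeed nondecreasing in $\rho$, so setting $\rho=1$ and passing to a quadratic in $t=c^2$ is legitimate. Your sign arguments need $\alpha\ge0$. The hypothesis $\alpha\in\mathbb{R}$ in the statement cannot be meant literally, since no case covers $\alpha+2\beta<0$.

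\textbf{The gap: the case analysis.} The mechanism you predict for the split at $\alpha+2\beta=1$ is false. After the triangle inequality, the $c^4$ term enters as $|Q|\,c^4$, so $G$ is only piecewise polynomial. Multiply $a_2a_4-a_3^2$ by $12(1+\alpha)^3(1+2\alpha)^2(1+3\alpha)/(1-\beta)^2$ and insert the Libera--Zlotkiewicz formulas. The coefficient of $c^4$ is then
\[
Q=\tfrac14(\alpha+2\beta-1)\bigl[2(1-\beta)(1+10\alpha+13\alpha^2)+(1+\alpha)(1+4\alpha+7\alpha^2)\bigr].
\]
The threshold $\alpha+2\beta=1$ comes from this sign change of $Q$, not from where the vertex sits. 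In the same normalization,
\[
G(\sqrt t,1)=-Lt^2+12\alpha(1+\alpha)^2(1+\alpha-\beta)\,t+12(1+\alpha)^3(1+3\alpha),
\]
with $L=(\alpha+\beta)S_3$ if $\alpha+2\beta\le 1$ and $L=\tfrac12 S_6$ if $\alpha+2\beta\ge 1$. In both regimes $L>0$, and the vertex $t_0=6\alpha(1+\alpha)^2(1+\alpha-\beta)/L$ lies in $[0,4)$. The value at $t_0$ gives the second and third expressions of the theorem, respectively.

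So the claim you intend to prove is false: it is not true that the vertex leaves $[0,4]$ exactly when $\alpha+2\beta\ge1$, nor that the $S_4,S_5,S_6$ bound is an endpoint value. For example, take $\alpha=2$, $\beta=0$. The third expression is $4001/83825\approx 0.0477$, attained at $t_0=648/479$. The endpoints $t=0$ and $t=4$ give only $1/25$ and about $0.018$. Evaluating at an endpoint, as your plan prescribes, would therefore produce a number that is not an upper bound.

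\textbf{Side remark on sharpness.} Sharpness is not part of Theorem E, but your suggested extremals are also off.
\begin{itemize}
\item In the middle case, equality requires $x=-1$, not $x=1$, because only then do all four terms carry the same sign. This matches the convex extremal with $c_1=1$, $c_2=-1$, $c_3=-2$.
\item The function $(1+z^2)/(1-z^2)$ has $c_1=0$ and yields only $(1-\beta)^2/(1+2\alpha)^2$.
\end{itemize}
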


\begin{remB} For $\alpha = 0$, inequality \eqref{TheoE1} reduces to the class $\mathcal{S}^*_{\beta}(\mathbb{U})$ of starlike functions of order $\beta$, yielding $H_2(2) \le (1 - \beta)^2$, as established in \emph{\cite{Lee-2013}} \emph{(\text{see also} \cite{Cho-2018,Krishna-Ramreddy-2012})}.

\noindent For $\beta = 0$ the bounds in \eqref{TheoE2} reduce to \eqref{TheoD1} in Theorem D.
	
\noindent When $\alpha = 1$, \eqref{TheoE2} gives
\begin{align*}
H_2(2) \le (1 - \beta)^2 \frac{36 - 36\beta + 17\beta^2}{144(2 - 2\beta + \beta^2)},
\end{align*}
which represents a non-sharp bound for the class $\mathcal{K}_{\beta}(\mathbb{U})$ of convex functions of order $\beta$ obtained in \emph{\cite{Krishna-Ramreddy-2012}}. This bound yields the sharp value $1/8$ for $\alpha = 1$ and $\beta = 0$.
\end{remB}

Before proceeding further, let us define some recurring notation and terminology.

Let ${\mathbb X}$ be a complex Banach space with norm $\|\cdot\|$. For $r > 0$, the open ball centered at the origin with radius $r$ is defined as $\mathbb{B}_r = \{z \in {\mathbb X} : \|z\| < r\}$. For $r = 1$, we simply write $\mathbb{B}$. When ${\mathbb X} = \mathbb{C}$, the unit disk $\mathbb{B}$ is denoted by $\mathbb{U}$. The boundary and closure of $\mathbb{B}$ are denoted by $\partial \mathbb{B}$ and $\overline{\mathbb{B}}$, respectively.
\vspace{1.2mm}

Let $L({\mathbb X}, {\mathbb Y})$ denote the space of continuous linear operators from a complex Banach space ${\mathbb X}$ into a complex Banach space ${\mathbb Y}$, and let $I$ denote the identity operator on ${\mathbb X}$. For each $z \in {\mathbb X} \setminus \{0\}$, define the set
\begin{align*}
\displaystyle
T(z) = \{ l_z \in L({\mathbb X}, \mathbb{C}) : l_z(z) = \|z\|, \quad \|l_z\| = 1 \},
\end{align*}
where $\|l_z\|$ is the supremum norm of an continuous linear operator $A\in L({\mathbb X}, {\mathbb Y})$:
\begin{align*}
\|A\|=\sup\{\|A(z)\|: \|z\|=1\}.
\end{align*}
By the Hahn–-Banach theorem, $T(z)$ is non-empty for every non-zero $z \in {\mathbb X}$.\vspace{1.2mm}

Let $\mathcal{H}(\Omega, \Omega')$ represent the set of all holomorphic mappings from a domain $\Omega \subset {\mathbb X}$ into a domain $\Omega' \subset {\mathbb Y}$. We abbreviate $\mathcal{H}(\Omega, {\mathbb X})$ as $\mathcal{H}(\Omega)$. If $f \in \mathcal{H}(\mathbb{B})$, then for all $w$ in a sufficiently small neighborhood of $z \in \mathbb{B}$, $f$ admits the power series expansion\begin{align*}f(w) = \sum_{n=0}^{\infty} \frac{1}{n!} D^n f(z)(w - z)^n,
\end{align*}
where $D^n f(z)$ denotes the $n$-th Fr\'{e}chet derivative of $f$ at $z$, represented by a bounded symmetric $n$-linear mapping from ${\mathbb X}^n$ into ${\mathbb X}$. Here, $D^n f(z)(w - z)^n$ is standard notation for evaluating the $n$-linear form at $n$ identical copies of $(w - z)$:
\begin{align*}
\displaystyle
D^n f(z)(w - z)^n = D^n f(z)(\underbrace{w - z, w - z, \dots, w - z}_{n \text{ times}}),
\end{align*}
with the convention $D^0 f(z)(z) = f(z)$.\vspace{1.2mm}

Let $\mathbb{B}$ be the open unit ball in a complex Banach space ${\mathbb X}$. A mapping $f \in \mathcal{H}(\mathbb{B})$ is said to be normalized if $f(0) = 0$ and $Df(0) = I$, where $I$ is the identity operator on ${\mathbb X}$.
We say that $f$ is:\\
$\bullet$ biholomorphic if it is a bijection onto its image $f(\mathbb{B})$ with a holomorphic inverse $f^{-1}: f(\mathbb{B}) \to \mathbb{B}$;\\
$\bullet$ locally biholomorphic if its Fr\'{e}chet derivative $Df(z)$ has a bounded inverse for every $z \in \mathbb{B}$.

The family of all normalized biholomorphic mappings on $\mathbb{B}$ is denoted by $\mathcal{S}(\mathbb{B})$.\vspace{1.6mm}

 A biholomorphic mapping on $\mathbb{B}$ is said to be starlike if $f(\mathbb{B})$ is starlike with
 respect to the origin. If $\mathbb{B}$ is the unit ball of ${\mathbb X}$, a complex Banach space, we let $\mathcal{S}^*(\mathbb{B})$ denote the subset of $\mathcal{S}(\mathbb{B})$ consisting of normalized starlike mappings from $\mathbb{B}$ into ${\mathbb X}$. We say that $f:\mathbb X\rightarrow\mathbb Y$ is
 \emph{convex} on $\Omega$ if $f$ is biholomorphic on $\Omega\subset\mathbb X$ and
 $f(\Omega)$ is a convex domain in ${\mathbb Y}$.\vspace{1.6mm}

In several complex variables, classical one-dimensional results often fail to hold in their original form. Most notably, Poincar\'{e} disproved the higher-dimensional analogue of the Riemann mapping theorem, while Cartan showed that the Bieberbach conjecture does not hold on the unit ball. Consequently, extending univalent function theory to higher dimensions necessitates the introduction of suitable sub-classes or additional structural constraints.

Recently, coefficient bounds for various sub-classes of normalized biholomorphic mappings in higher dimensions have been established by several authors; see, for example, Bracci \cite{Bracci-2015}, Bracci et al. \cite{Bracci-2016}, Bracci and Roth \cite{Bracci-2018}, Graham et al. \cite{Graham-2002, Graham-2016, Graham-2017}, Hamada and Honda \cite{Hamada-Honda-2008}, Hamada et al. \cite{Hamada-Honda-Kohr-2006} and Hamada and Kohr \cite{Hamada-Kohr-2020}.\vspace{1.6mm}

In this section, we recall the definition of starlikeness of order $\beta$ in complex Banach space.
\begin{defiB}\emph{\cite[Definition 1.2]{Hamada-2001}}
Let $f:\mathbb{B}\to {\mathbb X}$ be a normalized locally biholomorphic mapping and let $\beta\in [0,1)$. We say that $f$ is starlike of order $\beta$ if
\begin{align*}
\displaystyle
\Re\left\lbrace \frac{||z||}{l_z([Df(z)]^{-1}f(z))}\right\rbrace>\beta,\quad z\in\mathbb{B}\backslash\{0\},\quad l_z\in T(z).
\end{align*}
\end{defiB}
Because $f$ is locally biholomorphic then its Fr\'{e}chet derivative $Df(z)$ has a bounded inverse $[Df(z)]^{-1}f(z)$ for every $z \in \mathbb{B}$
Let $\mathcal{S}^*_{\beta}(\mathbb{B})$ denote the class of starlike mappings of order $\beta$ on $\mathbb{B}$. When $\beta=0$, the class $\mathcal{S}^*_{\beta}(\mathbb{B})$ reduces to the standard starlike class $\mathcal{S}^*(\mathbb{B})$.
\vspace{1.2mm}

There are now many references for starlike mappings in higher dimensions. For more information, see the recent monographs by Gong  \cite{Gong} and Kohr \& Liczberski  \cite{Kohr- Liczberski-1998}. Some of the foundational and original papers in this area include
\cite{Bomba,Dlugosz,Chu,SSQ_CHMS_1990,Su2,Thomas-Tuneski,Xu-He-Xu,Su3,Trybucka}.

\begin{defiC}\cite[Definition 1,1]{Liu-Liu-2010}
A normalized locally biholomorphic mapping $f\in\mathcal{H}(\mathbb{B})$ is said to be quasi-convex of type $B$ and order $\beta$ if
\begin{align*}
\displaystyle
\Re\left\lbrace \frac{l_z\left([Df(z)]^{-1}\left(D^2f(z)(z,z)+Df(z)z\right)\right)}{||z||}\right\rbrace>\beta,\quad z\in\mathbb{B}\backslash\{0\},\quad l_z\in T(z).
\end{align*}
\end{defiC}

If $\mathbb{B}$ is the unit ball of ${\mathbb X}$, we let $\mathcal{K}^{\beta}_B(\mathbb{B})$ denote the subset of $\mathcal{S}(\mathbb{B})$
consisting of normalized quasi-convex mappings of type $B$ and order $\beta$ from $\mathbb{B}$ into ${\mathbb X}$. When $\beta=0$, the class $\mathcal{K}^{\beta}_B(\mathbb{B})$ reduces to  $\mathcal{K}_B(\mathbb{B})$, the class of quasi-convex mappings of type $B$.\vspace{1.2mm}
\par Convex mappings in higher dimensions have been extensively studied in recent literature. For foundational work on this topic, see \cite{Ham4,Rop-Su,Su2,Su3,Xu-Dl-Li,Xu-Yang-Liu-Xu-2015,Zhang-Liu-2002}.\vspace{1.6mm}
\par Next, we define $\alpha$-convex function in complex Banach space.
\begin{defi}
Let $f:\mathbb{B}\to {\mathbb X}$ be a normalized locally biholomorphic mapping and let $\alpha\in \mathbb{R}$. Then $f$ is said to be $\alpha$-convex in $\mathbb{B}$ if and only if
\begin{align*}
\displaystyle
\Re\left\lbrace (1-\alpha)\frac{||z||}{l_z\left([DF(z)]^{-1}F(z)\right)} +\alpha\frac{l_z\left([DF(z)]^{-1}(D^2 F(z)(z^2)+DF(z) z)\right)}{||z||}\right\rbrace>0,
\end{align*}
$z\in\mathbb{B}\backslash\{0\}$ and $l_z\in T(z)$.	
\end{defi}

Let $\mathcal{M}_{\alpha}(\mathbb{B})$ denote the class of $\alpha$-convex in $\mathbb{B}$. It is straightforward to see that $\mathcal{M}_1(\mathbb{B}) = \mathcal{K}_B(\mathbb{B})$ and $\mathcal{M}_0(\mathbb{B})= \mathcal{S}^*(\mathbb{B})$.\vspace{2mm}

In this paper, by considering biholomorphic mappings $F:\mathbb{B}\to {\mathbb X}$ of the form $F(z)=f(z)z$ for $z\in\mathbb{B}$, where $f:\mathbb{B}\to \mathbb{C}$ is a holomorphic function satisfying $f(0)=1$, we establish:\vspace{1.2mm}

$\bullet$ inclusive conditions for $\mathcal{M}_{\alpha}(\mathbb{B})$ to belong to the classes of starlike mappings or quasi-convex mappings of type $B$ in $\mathbb{B}$ for specified ranges of $\alpha$;\vspace{1.2mm}

$\bullet$ a characterization showing that $F(z)=\frac{f(l_{u}(z))}{l_{u}(z)}z \in \mathcal{M}_{\alpha}(\mathbb{B})$ if and only if $f \in \mathcal{M}_{\alpha}(\mathbb{U})$, where $f \in \mathcal{S}(\mathbb{U})$, $u \in {\mathbb X}$ with $\|u\| = 1$, and $l_{u} \in T(u)$;\vspace{1.2mm}

$\bullet$ a duality relationship between the classes $\mathcal{S}^*(\mathbb{U})$ and $\mathcal{M}_{\alpha}(\mathbb{B})$ for $\alpha\geq 0$;\vspace{1.2mm}

$\bullet$ sharp upper bound estimates for the second Hankel determinant of mappings in $\mathcal{M}_\alpha(\mathbb{B})$ when $\alpha \ge 0$.

The results obtained here directly extend Theorems A–-E to the setting of complex Banach spaces.

\section{{\bf Some of the basic results about $\alpha$-convex function }}\label{Sec-2}

By considering biholomorphic mappings $F:\mathbb{B}\to {\mathbb X}$ of the form $F(z)=f(z)z$ ($z\in\mathbb{B}$), where $f:\mathbb{B}\to \mathbb{C}$ is a holomorphic function with $f(0)=1$, we extend Theorems A to complex Banach spaces as follows.

 \begin{theo}\label{Th-1}Let $\mathbb{B}$ be the unit ball in a complex Banach space ${\mathbb X}$. Let $f\in\mathcal{H}(\mathbb{B},\mathbb{C})$ with $f(0)=1$ and $F(z)=zf(z)\in\mathcal{M}_{\alpha}(\mathbb{B})$. Then $F\in\mathcal{S}^*(\mathbb{B})$. If $\alpha\geq 1$, then $F\in\mathcal{K}_B(\mathbb{B})$. Moreover if $F(z)=zf(z)\in\mathcal{M}_{\beta}(\mathbb{B})$, then $F(z)=zf(z)\in\mathcal{M}_{\alpha}(\mathbb{B})$ for all $\alpha,
 \beta\in\mathbb{R}$, $0\leq \alpha/\beta\leq 1$.
 \end{theo}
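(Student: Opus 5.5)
We reduce everything to one complex variable by computing the two functionals in the definition of $\mathcal{M}_\alpha(\mathbb{B})$ explicitly for $F(z)=zf(z)$. Fix $z\in\mathbb{B}\setminus\{0\}$ and write $\varphi(z)=Df(z)z$ and $\psi(z)=D^2f(z)(z,z)$. We have $DF(z)w=f(z)w+(Df(z)w)z$, and by the Sherman--Morrison type formula its inverse is
\[
[DF(z)]^{-1}w=\frac{1}{f(z)}\left(w-\frac{Df(z)w}{f(z)+\varphi(z)}\,z\right),
\]
valid wherever $f(z)\neq0$ and $f(z)+\varphi(z)\neq 0$; these follow from local biholomorphy. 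Hence
\[
[DF(z)]^{-1}F(z)=\frac{f(z)}{f(z)+\varphi(z)}\,z .
\]
Similarly $D^2F(z)(z,z)=2\varphi(z)z+\psi(z)z$, so $D^2F(z)(z^2)+DF(z)z=(f+3\varphi+\psi)z$. Applying the inverse, which acts on multiples of $z$ by dividing by $f+\varphi$, gives
\[
\frac{f+3\varphi+\psi}{f+\varphi}\,z .
\]
Since $l_z(z)=\|z\|$, the functional becomes
\[
\mathcal{J}_\alpha(z)=(1-\alpha)\left(1+\frac{\varphi}{f}\right)+\alpha\,\frac{f+3\varphi+\psi}{f+\varphi},
\]
independent of the choice of $l_z$.

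Next, fix $u\in\partial\mathbb{B}$ and set $g(\zeta)=\zeta f(\zeta u)$ for $\zeta\in\mathbb{U}$. Then $\zeta g'/g=1+\varphi(\zeta u)/f(\zeta u)$ and $1+\zeta g''/g'=(f+3\varphi+\psi)/(f+\varphi)$, both evaluated at $\zeta u$. Therefore
\[
\mathcal{J}_\alpha(\zeta u)=J_\alpha[g](\zeta),
\]
and $F\in\mathcal{M}_\alpha(\mathbb{B})$ holds iff $g_u\in\mathcal{M}_\alpha(\mathbb{U})$ for every unit vector $u$, where $g_u(0)=0$ and $g_u'(0)=1$ since $f(0)=1$. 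The analogous computation with $\alpha=0$ and $\alpha=1$ shows that $F\in\mathcal{S}^*(\mathbb{B})$ iff every $g_u\in\mathcal{S}^*(\mathbb{U})$, and $F\in\mathcal{K}_B(\mathbb{B})$ iff every $g_u\in\mathcal{K}(\mathbb{U})$.

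With this reduction the three claims follow from Theorem A and observation 2 applied to each $g_u$. Starlikeness comes from $\mathcal{M}_\alpha(\mathbb{U})\subseteq\mathcal{S}^*(\mathbb{U})$. The case $\alpha\ge1$ gives $g_u$ convex, hence $F\in\mathcal{K}_B(\mathbb{B})$. For the inclusion $\mathcal{M}_\beta\subseteq\mathcal{M}_\alpha$ when $0\le\alpha/\beta\le1$, we can also argue directly. Write $\mathcal{J}_\alpha=(1-t)\mathcal{J}_0+t\mathcal{J}_\beta$ with $t=\alpha/\beta\in[0,1]$; this holds because $\mathcal{J}_\alpha$ is affine in $\alpha$. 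Then $\Re\mathcal{J}_\beta>0$ together with $\Re\mathcal{J}_0>0$, the latter from the starlikeness just proved, gives $\Re\mathcal{J}_\alpha>0$.

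The main obstacle is justifying the inverse formula and the nonvanishing of $f$ and $f+\varphi$ on all of $\mathbb{B}$. We handle this via local biholomorphy: $\det$-type invertibility of $DF(z)$ forces $f(z)(f(z)+\varphi(z))\neq0$. For $f(z)=0$ the operator $DF(z)$ has rank at most one and fails to be invertible when $\dim\mathbb X\ge2$. In the one-dimensional case we instead use Remark A. A further subtlety is that Theorem A also needs univalence of $g_u$, which comes from Remark A and Theorem A itself. Finally, $F\in\mathcal{S}^*(\mathbb{B})$ requires that the analytic criterion $\Re\{\|z\|/l_z([DF]^{-1}F)\}>0$ implies biholomorphic starlikeness. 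This is the standard characterization for normalized locally biholomorphic maps on the unit ball of a Banach space, which we would cite.
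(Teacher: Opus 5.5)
Your proposal is correct and follows essentially the same route as the paper: restrict to the slice $g(\zeta)=\zeta f(\zeta z_0)$, identify the two functionals in the definition of $\mathcal{M}_\alpha(\mathbb{B})$ with $\zeta g'(\zeta)/g(\zeta)$ and $1+\zeta g''(\zeta)/g'(\zeta)$, and then invoke Theorem A. The differences are minor improvements. You get these identities from the explicit inverse of $DF(z)$ and the chain rule rather than from the paper's power-series manipulations. You prove the inclusion $\mathcal{M}_\beta(\mathbb{B})\subseteq\mathcal{M}_\alpha(\mathbb{B})$ directly from $\mathcal{J}_\alpha=(1-t)\mathcal{J}_0+t\mathcal{J}_\beta$, which is the mechanism behind the one-variable observation the paper merely cites. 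You are also more careful about $f\neq 0$, $f+Df(z)z\neq 0$, and the analytic characterization of starlikeness.
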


\begin{proof} By the given condition, we have $F\in\mathcal{M}_{\alpha}(\mathbb{B})$ and so
\begin{align}\label{Lem1.1}
\displaystyle
\Re\left\lbrace (1-\alpha)\frac{||z||}{l_z\left([DF(z)]^{-1}F(z)\right)} +\alpha\frac{l_z\left([DF(z)]^{-1}\phi(z)\right)}{||z||}\right\rbrace>0,
\end{align}
for $z\in\mathbb{B}\backslash\{0\}$ and $l_z\in T(z)$, where $\phi(z)=D^2 F(z)(z^2)+DF(z) z$.	
	
Let $z\in \mathbb{B}\backslash\{0\}$ be fixed and let $z_0=z/\|z\|$.
We define the function $H:\mathbb{U}\to \mathbb{C}$ as follows
\begin{align}\label{Lem1.2}	
\displaystyle
H(\zeta)=\begin{cases}
(1-\alpha)\frac{\zeta}{l_{z}\left([DF(\zeta z_0)]^{-1}F(\zeta z_0)\right)}+\alpha \frac{l_{z}\left([DF(\zeta z_0)]^{-1}\phi(\zeta z_0)\right)}{\zeta},& \zeta\neq 0,\\[8pt]
1& \zeta=0.
\end{cases}
\end{align}

\noindent Clearly $H\in\mathcal{H}(\mathbb{U})$ and $H(0)=1$.
Since $l_{\zeta z}=\frac{|\zeta|}{\zeta}l_z$ for each $\zeta\in\mathbb{U}\backslash\{0\}$, we have
\begin{align}\label{Lem1.3}
   H(\zeta)
   =(1-\alpha)\frac{||\zeta z_0||}{l_{\zeta z_0}\left([DF(\zeta z_0)]^{-1}F(\zeta z_0)\right)}+\alpha \frac{l_{\zeta z_0}\left([DF(\zeta z_0)]^{-1}\phi(\zeta z_0)\right)}{||\zeta z_0||}
\end{align}	
and so $\Re\{H(\zeta)\}>0$ for all $\zeta\in\mathbb{U}$.
Using a similar method as in Theorem 7.1.14 \cite{Graham-Kohr}, we have
\begin{align}\label{Lem1.4}
\displaystyle
[D F(z)]^{-1}F(z)=\frac{f(z) z}{f(z)+D f(z)z},
\end{align}
where $D f(z):\mathbb B\rightarrow\mathbb C$ because $f\in\mathcal{H}(\mathbb{B},\mathbb{C})$, $D F(z):\mathbb B\rightarrow\mathbb {\mathbb X}$, and
\begin{align}\label{Lem1.5}
\displaystyle
[D F(z)]^{-1}\phi(z)=\frac{\phi(z)}{f(z)+D f(z)z},
\end{align}
where $\phi(z)=D^2 F(z)(z^2)+DF(z) z$. Now from \eqref{Lem1.4}, it is clear that
\begin{align}\label{Lem1.6}
\frac{||\zeta z_0||}{l_{\zeta z_0}\left([DF(\zeta z_0)]^{-1}F(\zeta z_0)\right)}=\frac{f(\zeta z_0)+D f(\zeta z_0)\zeta z_0}{f(\zeta z_0)}.
\end{align}

\noindent Suppose $g(\zeta)=f(\zeta z_0)\zeta$. Then we have
\begin{align}\label{Lem1.7}
g'(\zeta)=f(\zeta z_0)+Df(\zeta z_0)\zeta z_0.
\end{align}

\noindent Since $g(\zeta)=f(\zeta z_0)\zeta$, using \eqref{Lem1.7} to \eqref{Lem1.6}, we get
\begin{align}\label{Lem1.8}
\frac{||\zeta z_0||}{l_{\zeta z_0}\left([DF(\zeta z_0)]^{-1}F(\zeta z_0)\right)}=\zeta\frac{g'(\zeta)}{g(\zeta)}.
\end{align}

\noindent Now we will be trying to express the  part with the function $\phi$ in \eqref{Lem1.3} in terms of the function $g$. As the function $F$ is normalized, it has the power series expansion
\begin{equation}\label{Lem1.9a}
F(z)=z+\sum\limits_{k=2}^{\infty}\frac{1}{k!}D^k F(0)(z^k).
\end{equation}
It is known that
\begin{equation*}
  \frac{1}{k!}D^kF(z)(h^k)=\frac{1}{2\pi i}\int_{|\zeta|=r}\frac{F(z+\zeta h)}{\zeta^{k+1}}{\rm d}\zeta,
\end{equation*}
so
\begin{align*}
  \phi(z)=D^2 F(z)(z^2)+DF(z)(z)=&\frac{1}{2\pi i}\int_{|\zeta|=r}\frac{(2+\zeta)F((1+\zeta)z)}{\zeta^{3}}{\rm d}\zeta.
\end{align*}
\noindent Using \eqref{Lem1.9a} in this integral, we get
\begin{align}\label{Lem1.11}
\displaystyle
\phi(z)=D^2 F(z)(z^2)+DF(z)
(z)=&z+\sum\limits_{k=2}^{\infty}\frac{k^2}{k!}D^k F(0)(z^k).
\end{align}

\noindent Since $F(z)=zf(z)$, we have (see \cite[Theorem 2]{Xiong-Yu-Wang-2025})
\begin{align}\label{Th-1x}
\displaystyle
D^mF(0)(z^m)=mD^{m-1}f(0)(z^{m-1})z,
\end{align}
where $m\in\mathbb{N}$. As $z_0=z/||z||$, $l_{z}=\|z\|$, it follows from \eqref{Th-1x} that
\begin{align}\label{Th-1y}
\displaystyle
\frac{l_{z}(D^mF(0)(z^m))}{m!||z||^m}=\frac{D^{m-1}f(0)(z_0^{m-1})}{(m-1)!}.
\end{align}

\noindent Now substituting \eqref{Th-1x} into \eqref{Lem1.9a} and \eqref{Lem1.11}, we get respectively
\begin{equation}\label{Lem1.12}
\displaystyle
F(z)=z+\sum\limits_{k=2}^{\infty}\frac{k}{k!}D^{k-1} f(0)(z^{k-1}) z
\end{equation}
and
\begin{equation}\label{Lem1.13}
\displaystyle
\phi(z)=D^2 F(z)(z^2)+DF(z)
(z)=z+\sum\limits_{k=2}^{\infty}\frac{k^3}{k!}D^{k-1} f(0)(z^{k-1}) z.
\end{equation}

\noindent Therefore, we have
\begin{align}\label{Lem1.13a}
\displaystyle
\phi(\zeta z_0)=\zeta z_0\left(1+\sum\limits_{m=1}^{\infty}\frac{(m+1)^2}{m!}D^{m} f(0)(z_0^{m})\zeta^m\right).
\end{align}

\noindent If
\begin{align}\label{Lem1.14}
\displaystyle
f(\zeta z_0)=1+\sum\limits_{k=1}^{\infty}\frac{1}{k!}D^k f(0)(z_0^k)\zeta^k,
\end{align}
then
\begin{align}\label{Lem1.15}
\displaystyle
D f(\zeta z_0)(\zeta z_0)=\sum\limits_{k=1}^{\infty}\frac{k}{k!}D^k f(0)(z_0^k)\zeta^k.
\end{align}

\noindent In view of \eqref{Lem1.7} and adding \eqref{Lem1.14} and \eqref{Lem1.15}, we obtain
\begin{align}\label{Lem1.16}
\displaystyle
g'(\zeta)=f(\zeta z_0)+D f(\zeta z_0)\zeta z_0=1+\sum\limits_{k=1}^{\infty}\frac{k+1}{k!}D^k f(0)(z_0^k)\zeta^k.
\end{align}	

\noindent Clearly
\begin{align}\label{Lem1.17}
\displaystyle
\zeta g''(\zeta)+g'(\zeta)=1+\sum\limits_{k=1}^{\infty}\frac{(k+1)^2}{k!}D^k f(0)(z_0^k)\zeta^k.
\end{align}

\noindent Now from \eqref{Lem1.5}, \eqref{Lem1.13a} and \eqref{Lem1.16}, we get
\begin{align}\label{Lem1.18}
\displaystyle
\frac{l_{\zeta z_0}\left([D F(\zeta z_0)]^{-1}\phi(\zeta z_0)\right)}{||\zeta z_0||}=\frac{1+\sum\limits_{m=1}^{\infty}\frac{(m+1)^2}{m!}D^{m} f(0)(z^{m})\zeta^m}{1+\sum\limits_{k=1}^{\infty}\frac{k+1}{k!}D^k f(0)(z_0^k)\zeta^k}.
\end{align}

\noindent Using \eqref{Lem1.16} and \eqref{Lem1.17} to \eqref{Lem1.18}, we express the  second part of \eqref{Lem1.3} in terms of the function $g$
\begin{align}\label{Lem1.19}
\frac{l_{\zeta z_0}\left([D F(\zeta z_0)]^{-1}\phi(\zeta z_0)\right)}{||\zeta z_0||}
	=&\frac{\zeta g''(\zeta)+g'(\zeta)}{g'(\zeta)}=1+\zeta\frac{
	g''(\zeta)}{g'(\zeta)}.
\end{align}

\noindent Now in view of \eqref{Lem1.8} and  \eqref{Lem1.19}, we find that
\begin{align*}\label{Lem1.21}
\Re\{H(\zeta)\}=&\Re\left\{(1-\alpha)\frac{||\zeta z_0||}{l_{\zeta z_0}\left([DF(\zeta z_0)]^{-1}F(\zeta z_0)\right)}+\alpha \frac{l_{\zeta z_0}\left([DF(\zeta z_0)]^{-1}\phi(\zeta z_0)\right)}{||\zeta z_0||}\right\}\\
        =&\Re\left\{(1-\alpha)\frac{\zeta g'(\zeta)}{g(\zeta)}+\alpha\left\{1+\frac{
	\zeta g''(\zeta)}{g'(\zeta)}\right\}\right\}>0, \quad (\zeta\in\mathbb U).
\end{align*}
Therefore, the function $g$ is $\alpha$-convex function defined in Definition A and so, from Theorem A, we have
\begin{equation*}
  \Re\left\{\frac{\zeta g'(\zeta)}{g(\zeta)}\right\}=\Re\left\{\frac{||\zeta z_0||}{l_{\zeta z_0}\left([DF(\zeta z_0)]^{-1}F(\zeta z_0)\right)}\right\}>0, \quad (\zeta\in\mathbb U)
\end{equation*}
so the equivalence between \eqref{Lem1.2} and \eqref{Lem1.3} implies that $F\in\mathcal{S}^*(\mathbb{B})$. Furthermore, from Theorem A, we have for $\alpha\geq 1$
\begin{equation*}
  \Re\left\{1+\frac{\zeta g''(\zeta)}{g'(\zeta)}\right\}=\Re\left\{\frac{l_{\zeta z_0}\left([DF(\zeta z_0)]^{-1}\phi(\zeta z_0)\right)}{||\zeta z_0||}\right\}>0, \quad (\zeta\in\mathbb U)
\end{equation*}
So, if $\alpha\geq 1$, then $F\in\mathcal{K}_B(\mathbb{B})$. Furthermore, the observations after Theorem A imply that if $F(z)=zf(z)\in\mathcal{M}_{\beta}(\mathbb{B})$, then $F(z)=zf(z)\in\mathcal{M}_{\alpha}(\mathbb{B})$ for all $\alpha,
 \beta\in\mathbb{R}$, $0\leq \alpha/\beta\leq 1$.

\end{proof}

 To establish our main results, we first introduce the following Propositions. In particular, Proposition \ref{pro-1} provides the extremal mappings needed for our main theorems.
\begin{pro}\label{pro-1} Let $f \in \mathcal{S}(\mathbb{U})$, $u \in {\mathbb X}$ with $\|u\| = 1$ and $l_{u} \in T(u)$. Then $F$ defined by
$F(z)=\frac{f(l_{u}(z))}{l_{u}(z)}z$ belongs to $\mathcal{M}_{\alpha}(\mathbb{B})$ if and only if $f \in \mathcal{M}_{\alpha}(\mathbb{U})$.
\end{pro}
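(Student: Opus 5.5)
The plan is to reduce the condition defining $\mathcal{M}_{\alpha}(\mathbb{B})$ for $F(z)=\frac{f(l_u(z))}{l_u(z)}z$ to a one-variable condition on $f$. We use the computation in the proof of Theorem \ref{Th-1} with $F(z)=zh(z)$ and $h(z)=f(l_u(z))/l_u(z)$. Here $h\in\mathcal{H}(\mathbb{B},\mathbb{C})$, and $h(0)=1$ because $f(w)/w\to 1$. Formulas \eqref{Lem1.4} and \eqref{Lem1.5} give
\[
\frac{\|z\|}{l_z([DF(z)]^{-1}F(z))}=\frac{h(z)+Dh(z)z}{h(z)},\qquad
\frac{l_z([DF(z)]^{-1}\phi(z))}{\|z\|}=\frac{l_z(\phi(z))}{\|z\|\,(h(z)+Dh(z)z)}.
\]
The key observation is that $h(\zeta z_0)\zeta = f(\zeta l_u(z_0))/l_u(z_0)$ when $l_u(z_0)\neq 0$. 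In the notation of Theorem \ref{Th-1}, set $g(\zeta)=h(\zeta z_0)\zeta$ and $w=\zeta l_u(z_0)$. Then
\[
\frac{\zeta g'(\zeta)}{g(\zeta)}=\frac{wf'(w)}{f(w)},\qquad 1+\frac{\zeta g''(\zeta)}{g'(\zeta)}=1+\frac{wf''(w)}{f'(w)}.
\]
Combining this with \eqref{Lem1.8} and \eqref{Lem1.19}, the expression in the definition of $\mathcal{M}_{\alpha}(\mathbb{B})$ evaluated at $\zeta z_0$ equals $J_{\alpha}[f](\zeta l_u(z_0))$.

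If $l_u(z_0)=0$, then $h(\zeta z_0)\equiv 1$. In that case $g(\zeta)=\zeta$ and the expression equals $1$, which is harmless.

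For the direction $f\in\mathcal{M}_{\alpha}(\mathbb{U})\Rightarrow F\in\mathcal{M}_{\alpha}(\mathbb{B})$, take any $z\in\mathbb{B}\setminus\{0\}$ and write $z=\|z\|z_0$. Then $w=\|z\|l_u(z_0)=l_u(z)$ satisfies $|w|\le \|z\|<1$, since $\|l_u\|=1$. Hence the expression equals $J_\alpha[f](l_u(z))$ when $l_u(z)\neq0$, or $1$ when $l_u(z)=0$. Either way it has positive real part, for every $l_z\in T(z)$; the expression does not depend on the choice of $l_z$, because $l_z(z)=\|z\|$. We also need $F$ to be normalized and locally biholomorphic. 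By \eqref{Lem1.4}, $DF(z)$ is invertible exactly when $h(z)\neq 0$ and $h(z)+Dh(z)z\neq 0$. These quantities equal $f(w)/w$ and $f'(w)$, which are nonzero by Remark A (or since $f\in\mathcal{S}(\mathbb{U})$).

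For the converse, take $z=\zeta u$ with $\zeta\in\mathbb{U}\setminus\{0\}$. Then $z_0=(\zeta/|\zeta|)u$ and $l_u(z)=\zeta$, so every point of $\mathbb{U}\setminus\{0\}$ is attained as $w=l_u(z)$. The identity then gives $\Re J_\alpha[f](\zeta)>0$ for $\zeta\neq 0$. At $\zeta=0$ we have $J_\alpha[f](0)=1$. Hence $f\in\mathcal{M}_\alpha(\mathbb{U})$.

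The main obstacle is verifying the identity for the $\phi$-term. Theorem \ref{Th-1} gives \eqref{Lem1.19} via power series in $D^kh(0)(z_0^k)$, and we must check that $D^kh(0)(z_0^k)/k!=a_{k+1}\,l_u(z_0)^k$. This follows directly from expanding $h(z)=1+\sum_{k\ge1}a_{k+1}l_u(z)^k$. With it, $g(\zeta)=\zeta+\sum a_{k+1}l_u(z_0)^k\zeta^{k+1}$, which is exactly $f(\zeta l_u(z_0))/l_u(z_0)$. The rest is routine chain-rule bookkeeping.
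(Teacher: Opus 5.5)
Your proof is correct and reaches the same pointwise identity as the paper: the defining expression at $z$ equals $J_\alpha[f](l_u(z))$. You get there by a different route, though.

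The paper computes directly on $\mathbb{B}$. It uses the explicit inverse $[DF(z)]^{-1}\eta$ for $F=h(z)z$ (in your notation) and notes that $DF(z)z=f'(l_u(z))z$. Hence $\phi(z)=D(DF(z)z)z=\bigl(f''(l_u(z))l_u(z)+f'(l_u(z))\bigr)z$, and it applies the inverse to this; no slicing is involved.

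You instead restrict to the complex line through $z_0$ and reuse the slice identities \eqref{Lem1.8} and \eqref{Lem1.19} from Theorem \ref{Th-1}. You then observe that each slice $\zeta\mapsto\zeta h(\zeta z_0)$ is the dilation $f(\zeta l_u(z_0))/l_u(z_0)$ of $f$. In effect you derive Proposition \ref{pro-1} from the machinery behind Proposition \ref{pro-2}.

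The paper's route is more self-contained. Yours shows structurally why the result holds, and it is more careful at two points the paper passes over:
\begin{enumerate}
\item[(i)] the case $l_u(z)=0$, where the paper's quotient $f'(l_u(z))l_u(z)/f(l_u(z))$ must be read as its limit $1$;
\item[(ii)] the fact, via $z=\zeta u$, that $l_u$ attains every point of $\mathbb{U}$, which is what the ``only if'' direction needs.
\end{enumerate}

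Your stated ``main obstacle'' is not actually one. The identities \eqref{Lem1.8} and \eqref{Lem1.19} hold for every $F=zh$ by the chain rule; indeed $\phi(\zeta z_0)=\bigl(\zeta g''(\zeta)+g'(\zeta)\bigr)\zeta z_0$. Also $g(\zeta)=f(\zeta l_u(z_0))/l_u(z_0)$ is immediate from the definition of $h$, so no coefficient matching is needed.
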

\begin{proof} Let $g(z)=\frac{f(l_{u}(z))}{l_{u}(z)}$. Since $f\in\mathcal{S}(\mathbb{U})$, it follows that
\begin{align*}
\frac{f'(l_{u}(z))l_{u}(z)}{f(l_{u}(z))}\neq 0, \quad z\in\mathbb{B}.
\end{align*}
	
\noindent Straightforward calculation yields
\begin{align}\label{Lem2.1}
\frac{Dg(z) z}{g(z)}+1=\frac{f'(l_{u}(z))l_{u}(z)}{f(l_{u}(z))}.
\end{align}	
	
\noindent Clearly $Dg(z) z/g(z)+1\neq 0$ for all $z\in\mathbb{B}$. Using a similar method as in Theorem 7.1.14 \cite{Graham-Kohr}, we have
\begin{align*}
\displaystyle
[D F(z)]^{-1}\eta=\frac{1}{g(z)}\left\lbrack \eta-\frac{(Dg(z)\eta)z}{g(z)+D g(z)z}\right\rbrack,\quad \eta\in {\mathbb X}.
\end{align*}

\noindent Hence, $F$ is a normalized locally biholomorphic on $\mathbb{B}$, and
\begin{align*}
\displaystyle
[D F(z)]^{-1}F(z)=\frac{f(l_{u}(z))}{f'(l_{u}(z))l_{u}(z)}z
\end{align*}
and so
\begin{align}\label{Lem2.2}
\frac{||z||}{l_z\left([D F(z)]^{-1}F(z)\right)}=\frac{f'(l_{u}(z))l_{u}(z)}{f(l_{u}(z))}.
\end{align}

\noindent Since $F(z)=g(z)z$, it follows that $D F(z)\eta=(Dg(z)\eta)z+g(z)\eta$, $\eta\in {\mathbb X}$. Consequently
$D F(z)z=f'(l_u(z))z$. Therefore
\begin{align*}
D^2 F(z)(z^2)+DF(z)z=D(DF(z)z)z=f''(l_u(z))l_u(z)z+f'(l_u(z))z,
\end{align*}
which shows that $D^2 F(z)(z^2)=f''(l_u(z))l_u(z)z$.
Note that
\begin{align*}
&[DF(z)]^{-1}(D^2 F(z)(z^2)+DF(z)z)\\=&z+[DF(z)]^{-1}(D^2 F(z)(z^2))\\=&
z+\frac{1}{g(z)}\left\lbrack f''(l_u(z))l_u(z)z-\frac{Dg(z)z}{g(z)+Dg(z)z}f''(l_u(z))l_u(z)z\right\rbrack, \quad z\in\mathbb{B}
\end{align*}
and so from \eqref{Lem2.1}, we obtain
\begin{align*}
[DF(z)]^{-1}(D^2 F(z)(z^2)+DF(z)z)=\left\lbrack 1+\frac{f''(l_u(z))l_u(z)}{f'(l_u(z))}\right\rbrack z,\quad z\in\mathbb{B}.
\end{align*}

\noindent Clearly
\begin{align}\label{Lem2.3}
\frac{l_z\left([DF(z)]^{-1}(D^2 F(z)(z^2)+DF(z)z)\right)}{||z||}=1+\frac{f''(l_u(z))l_u(z)}{f'(l_u(z))},\quad z\in\mathbb{B}.
\end{align}

\noindent Now from \eqref{Lem2.2} and \eqref{Lem2.3}, we see that
\begin{align*}
&\Re\left\lbrace (1-\alpha)\frac{||z||}{l_z\left([DF(z)]^{-1}F(z)\right)} +\alpha\frac{l_z\left([DF(z)]^{-1}(D^2 F(z)(z^2)+DF(z)z)\right)}{||z||}\right\rbrace\\=&\Re\left\lbrace (1-\alpha)\frac{f'(l_{u}(z))l_{u}(z)}{f(l_{u}(z))}+\alpha\left(1+\frac{f''(l_u(z))l_u(z)}{f'(l_u(z))}\right)\right\rbrace>0
\end{align*}	
if and only if
\begin{align*}
\Re\left\lbrace (1-\alpha)\frac{\xi f'(\xi)}{f(\xi)}+\alpha\left(1+\frac{\xi f''(\xi)}{f'(\xi)}\right)\right\rbrace>0, \quad \xi\in\mathbb{U}.
\end{align*}
This shows that
$F(z)=\frac{f(l_{u}(z))}{l_{u}(z)}z$ belongs to $\mathcal{M}_{\alpha}(\mathbb{B})$ if and only if $f \in \mathcal{M}_{\alpha}(\mathbb{U})$.
\end{proof}
\begin{pro}\label{pro-2} Let $f \in \mathcal{H}(\mathbb{B},\mathbb{C})$ with $f(0)=1$ and $F(z)=f(z)z$. Let $g(\zeta)=f(\zeta z_0)\zeta$, where $\zeta\in\mathbb{U}$ and $z_0\in\partial \mathbb{B}$ is fixed. Then $g\in\mathcal{M}_{\alpha}(\mathbb{U})$ if and only if $F\in\mathcal{M}_{\alpha}(\mathbb{B})$.
\end{pro}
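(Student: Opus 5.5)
The plan is to reuse the identity obtained in the proof of Theorem \ref{Th-1}, which rewrites the Banach-space $\alpha$-convexity functional of $F$ along the complex line through $z_0$ as the one-variable functional $J_\alpha[g]$. The condition $F\in\mathcal{M}_\alpha(\mathbb{B})$ involves every direction. So I read the statement as saying that $F\in\mathcal{M}_\alpha(\mathbb{B})$ if and only if $g=g_{z_0}\in\mathcal{M}_\alpha(\mathbb{U})$ for every $z_0\in\partial\mathbb{B}$. For a single fixed $z_0$ only the implication $F\in\mathcal{M}_\alpha(\mathbb{B})\Rightarrow g\in\mathcal{M}_\alpha(\mathbb{U})$ can hold.

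\emph{Key identity.} Since $F(z)=f(z)z$, the formulas \eqref{Lem1.4} and \eqref{Lem1.5} show that $[DF(z)]^{-1}F(z)$ and $[DF(z)]^{-1}\phi(z)$ are scalar multiples of $z$. The scalars are
\[
\frac{f(z)}{f(z)+Df(z)z}
\quad\text{and}\quad
\frac{1+\sum_{k\ge1}\frac{(k+1)^2}{k!}D^kf(0)(z^k)}{f(z)+Df(z)z}
\]
respectively; the second uses \eqref{Lem1.13}. Consequently the value of $l_z$ on these vectors does not depend on the choice of $l_z\in T(z)$: it is the scalar times $\|z\|$. Put $z=\zeta z_0$ with $\zeta\in\mathbb{U}\setminus\{0\}$, and use \eqref{Lem1.8}, \eqref{Lem1.16}, \eqref{Lem1.17} and \eqref{Lem1.19} exactly as in Theorem \ref{Th-1}. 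This gives
\[
(1-\alpha)\frac{\|\zeta z_0\|}{l_{\zeta z_0}([DF(\zeta z_0)]^{-1}F(\zeta z_0))}+\alpha\frac{l_{\zeta z_0}([DF(\zeta z_0)]^{-1}\phi(\zeta z_0))}{\|\zeta z_0\|}=J_\alpha[g](\zeta).
\]

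\emph{($\Rightarrow$).} Assume $F\in\mathcal{M}_\alpha(\mathbb{B})$. By the identity, $\Re J_\alpha[g](\zeta)>0$ for every $\zeta\in\mathbb{U}\setminus\{0\}$. At $\zeta=0$ we have $J_\alpha[g](0)=1$. We also have $g(0)=0$ and $g'(0)=f(0)=1$. It remains to see that $g\in\mathcal{S}(\mathbb{U})$. This follows from Theorem A, or from Remark A together with Theorem A, because $\Re J_\alpha[g]>0$ forces $g$ to be starlike and univalent. Hence $g\in\mathcal{M}_\alpha(\mathbb{U})$.

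\emph{($\Leftarrow$).} Assume $g_{z_0}\in\mathcal{M}_\alpha(\mathbb{U})$ for every $z_0\in\partial\mathbb{B}$. First I would check that $F$ is a normalized locally biholomorphic mapping. By Remark A, $g(\zeta)g'(\zeta)/\zeta\neq0$. By \eqref{Lem1.7} this means $f(\zeta z_0)\neq0$ and $f(\zeta z_0)+Df(\zeta z_0)\zeta z_0\neq0$. Since every $z\in\mathbb{B}$ has the form $\zeta z_0$, we get $f(z)\neq0$ and $f(z)+Df(z)z\neq0$ on all of $\mathbb{B}$. The explicit inverse formula used in the proof of Proposition \ref{pro-1}, with $g$ there replaced by $f$, then shows that $DF(z)$ is invertible. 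Normalization, $F(0)=0$ and $DF(0)=I$, follows from $f(0)=1$. Next, given $z\neq0$ and $l_z\in T(z)$, take $z_0=z/\|z\|$ and $\zeta=\|z\|$. The identity then turns the defining expression of Definition 1.1 at $z$ into $J_\alpha[g_{z_0}](\|z\|)$, whose real part is positive. Hence $F\in\mathcal{M}_\alpha(\mathbb{B})$.

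The main obstacle I expect is in ($\Leftarrow$). There one has to verify carefully that local biholomorphy of $F$ and the independence from the choice of $l_z$ genuinely follow from one-variable information, since the definition of $\mathcal{M}_\alpha(\mathbb{B})$ presupposes both. The remaining steps are the computations already carried out in Theorem \ref{Th-1}.
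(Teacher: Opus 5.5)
Your proposal is correct and follows essentially the paper's route. Both arguments identify the defining functional of $F$ at $\zeta z_0$ with $J_\alpha[g](\zeta)$ via \eqref{Lem1.8} and \eqref{Lem1.19} and read off the two implications; you also supply details the paper leaves implicit, namely independence from the choice of $l_z$, the explicit inverse of $DF(z)$, and univalence of $g$.

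Your reading of the converse as requiring $g_{z_0}\in\mathcal{M}_\alpha(\mathbb{U})$ for every $z_0\in\partial\mathbb{B}$ is justified. For $\dim\mathbb{X}\ge 2$ a single direction cannot control $F$: take $f(z)=1+2z_2$ and $z_0=e_1$ in $\mathbb{C}^2$, so $g(\zeta)=\zeta$ while $F$ fails to be locally biholomorphic at $(0,-1/2)$. This stronger hypothesis is exactly what the paper's converse tacitly uses when it writes an arbitrary $z$ as $\|z\|z_0$.
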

\begin{proof}
First we suppose that $F\in\mathcal{M}_{\alpha}(\mathbb{B})$. Clearly $F$ is biholomorphic on $\mathbb{B}$. Since $F(z)=f(z)z$, it follows that $D F(z)z=(Df(z)z)z+f(z)z$, $z\in \mathbb{B}$. Thus $f(z)\neq 0$ and $f(z)+Df(z)z\neq 0$ for all $z\in\mathbb{B}$. Since $g(\zeta)=f(\zeta z_0)\zeta$, we using \eqref{Lem1.8}, we get
\begin{align}\label{Lem3.1}
\frac{||\zeta z_0||}{l_{\zeta z_0}\left([DF(\zeta z_0)]^{-1}F(\zeta z_0)\right)}=\zeta\frac{g'(\zeta)}{g(\zeta)}.
\end{align}

\noindent Let $\phi(z)=D^2 F(z)(z^2)+DF(z)z)$.
On the other hand, \eqref{Lem1.19} gives
\begin{align}\label{Lem3.2}
\displaystyle
\frac{l_{\zeta z_0}\left([D F(\zeta z_0)]^{-1}\phi(\zeta z_0)\right)}{||\zeta z_0||}
=1+\zeta\frac{
g''(\zeta)}{g'(\zeta)}.
\end{align}

\noindent Therefore, using \eqref{Lem3.1} and \eqref{Lem3.2}, we obtain that
\begin{align*}
&\Re\left\lbrace (1-\alpha)\frac{\zeta g'(\zeta)}{g(\zeta)}+\alpha\left(1+\frac{\zeta g''(\zeta)}{g'(\zeta)}\right)\right\rbrace\\=
&\Re\left\lbrace (1-\alpha)\frac{||\zeta z_0||}{l_{\zeta z_0}\left([DF(\zeta z_0)]^{-1}F(\zeta z_0)\right)} +\alpha\frac{l_{\zeta z_0}\left([DF(\zeta z_0)]^{-1}\phi(\zeta z_0)\right)}{||\zeta z_0||}\right\rbrace>0,
\end{align*}
which implies that $g\in\mathcal{M}_{\alpha}(\mathbb{U})$.
\vspace{1.2mm}

\noindent Conversely, we assume $g\in\mathcal{M}_{\alpha}(\mathbb{U})$. Clearly $\frac{\zeta g'(\zeta)}{g(\zeta)}\neq 0$, $\zeta\in\mathbb{U}$. Therefore from \eqref{Lem2.1}, we have
\begin{align*}
\frac{Df(z) z}{f(z)}+1\neq 0, \quad z\in\mathbb{B}.
\end{align*}	

\noindent From this and \eqref{Lem1.4}, we readily deduce that $F$ is locally biholomorphic on $\mathbb{B}$. Let us take $z=\zeta z_0$, where $\zeta=||z||$.
Then from \eqref{Lem3.1} and \eqref{Lem3.2}, we get
\begin{align*}
&\Re\left\lbrace (1-\alpha)\frac{||z||}{l_{z}\left([DF(z)]^{-1}F(z)\right)} +\alpha\frac{l_{z}\left([DF(z)]^{-1}\phi(z)\right)}{||z||}\right\rbrace\\&=
\Re\left\lbrace (1-\alpha)\frac{\zeta g'(\zeta)}{g(\zeta)}+\alpha\left(1+\frac{\zeta g''(\zeta)}{g'(\zeta)}\right)\right\rbrace>0,
\end{align*}
which implies that $F\in\mathcal{M}_{\alpha}(\mathbb{B})$.
\end{proof}

\begin{theo}\label{Th-2} Let $f \in \mathcal{S}(\mathbb{U})$, $u \in {\mathbb X}$ with $\|u\| = 1$ and $l_{u} \in T(u)$. Then
$F(z)=\frac{f(l_{u}(z))}{l_{u}(z)}z\in\mathcal{M}_{\alpha}(\mathbb{B})$ for $\alpha>0$ if and only if there exists $\tilde F(z)=\frac{\tilde f(l_{u}(z))}{l_{u}(z)}z\in\mathcal{S}^*(\mathbb{B})$, where $\tilde f \in \mathcal{S}(\mathbb{U})$ such that
\begin{align}\label{Th-2.0}
f(l_u(z))=\left\lbrack \frac{1}{\alpha} \int_{0}^{l_u(z)} \tilde f^{1/\alpha}(t) t^{-1}\, {\rm d}t \right\rbrack^{\alpha},
\end{align}
where powers are meant as principal values.
\end{theo}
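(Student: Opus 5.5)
The plan is to reduce everything to the one-variable statement, Theorem B, using Proposition \ref{pro-1} twice: once with parameter $\alpha$ and once with parameter $0$.

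First, by Proposition \ref{pro-1}, $F(z)=\frac{f(l_u(z))}{l_u(z)}z\in\mathcal{M}_{\alpha}(\mathbb{B})$ if and only if $f\in\mathcal{M}_{\alpha}(\mathbb{U})$. Applying Proposition \ref{pro-1} with $\alpha=0$, and recalling that $\mathcal{M}_0(\mathbb{B})=\mathcal{S}^*(\mathbb{B})$ and $\mathcal{M}_0(\mathbb{U})=\mathcal{S}^*(\mathbb{U})$, we also get: for $\tilde f\in\mathcal{S}(\mathbb{U})$, $\tilde F(z)=\frac{\tilde f(l_u(z))}{l_u(z)}z\in\mathcal{S}^*(\mathbb{B})$ if and only if $\tilde f\in\mathcal{S}^*(\mathbb{U})$. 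So the claim is equivalent to the following: for $\alpha>0$, $f\in\mathcal{M}_\alpha(\mathbb{U})$ if and only if there is $\tilde f\in\mathcal{S}^*(\mathbb{U})$ with
\begin{align*}
f(\xi)=\left\lbrack\frac{1}{\alpha}\int_0^{\xi}\tilde f^{1/\alpha}(t)t^{-1}\,{\rm d}t\right\rbrack^{\alpha},\quad \xi\in\mathbb{U}.
\end{align*}
This equivalence is exactly Theorem B of Todorov.

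The remaining step is to pass between the identity in $\xi\in\mathbb{U}$ and the identity \eqref{Th-2.0} in $z\in\mathbb{B}$. Since $\|l_u\|=1$ and $l_u(u)=1$, the map $l_u:\mathbb{B}\to\mathbb{U}$ is surjective: $l_u(\xi u)=\xi$ for every $\xi\in\mathbb{U}$. Hence the displayed identity on $\mathbb{U}$ is equivalent to \eqref{Th-2.0} holding for all $z\in\mathbb{B}$.

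Both directions then follow by composing these equivalences.
\begin{itemize}
\item[($\Rightarrow$)] If $F\in\mathcal{M}_\alpha(\mathbb{B})$, then $f\in\mathcal{M}_\alpha(\mathbb{U})$. Theorem B gives a starlike $\tilde f$, which is normalized, univalent and in $\mathcal{S}(\mathbb{U})$. Proposition \ref{pro-1} with $\alpha=0$ gives $\tilde F\in\mathcal{S}^*(\mathbb{B})$, and substituting $\xi=l_u(z)$ yields \eqref{Th-2.0}.
\item[($\Leftarrow$)] Conversely, $\tilde F\in\mathcal{S}^*(\mathbb{B})$ gives $\tilde f\in\mathcal{S}^*(\mathbb{U})$. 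Surjectivity of $l_u$ turns \eqref{Th-2.0} into the one-variable representation. Theorem B then gives $f\in\mathcal{M}_\alpha(\mathbb{U})$, and Proposition \ref{pro-1} gives $F\in\mathcal{M}_\alpha(\mathbb{B})$.
\end{itemize}

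The main obstacle is interpreting the principal-value powers consistently, together with the singular factor $t^{-1}$ in the integrand. Since $\tilde f(t)/t$ is nonvanishing with value $1$ at $0$, one should write $\tilde f^{1/\alpha}(t)t^{-1}=t^{1/\alpha-1}(\tilde f(t)/t)^{1/\alpha}$, with the holomorphic branch equal to $1$ at the origin. The outer $\alpha$-th power must be taken with the branch that makes $f(\xi)/\xi\to1$ as $\xi\to 0$, exactly as in Theorem B. Once these branch choices are fixed as in Theorem B, the composition with $l_u$ introduces no further ambiguity, because it is a pure substitution $\xi=l_u(z)$.
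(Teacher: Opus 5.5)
Your proof is correct. It differs from the paper's mainly in where the one-variable work is done.

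\textbf{The paper's route.} The paper also begins with Proposition \ref{pro-1} to get $f\in\mathcal{M}_\alpha(\mathbb{U})$, but then it re-derives Todorov's representation inline:
\begin{itemize}
\item it writes $J_\alpha[f]=p\in\mathcal{P}$ and defines $\tilde f$ by $\xi\tilde f'(\xi)/\tilde f(\xi)=p(\xi)$;
\item it cites Lemma 2.1 of Xu--He--Xu, rather than Proposition \ref{pro-1} with $\alpha=0$, to get $\tilde F\in\mathcal{S}^*(\mathbb{B})$;
\item it integrates to obtain $\frac{f(\xi)}{\xi}\bigl[\frac{\xi f'(\xi)}{f(\xi)}\bigr]^{\alpha}=\frac{\tilde f(\xi)}{\xi}$;
\item it solves the resulting linear ODE for $u=(f(\xi)/\xi)^{1/\alpha}$.
\end{itemize}
The converse is stated only briefly, as reversing these computations.

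\textbf{Your route.} You treat Theorem B as a black box and apply Proposition \ref{pro-1} at both parameters $\alpha$ and $0$. The second use relies on $\mathcal{M}_0(\mathbb{B})=\mathcal{S}^*(\mathbb{B})$, which rests on the standard analytic characterization of starlikeness. This buys three things:
\begin{itemize}
\item both directions are handled symmetrically and rigorously;
\item no external lemma is needed;
\item the surjectivity $l_u(\xi u)=\xi$, which the paper uses silently when it sets $\xi=l_u(z)$, is made explicit.
\end{itemize}

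\textbf{What the paper's version buys.} It gives an explicit description of the associated starlike function: $\tilde f$ is determined by $\xi\tilde f'/\tilde f=J_\alpha[f]$. Your citation of Theorem B leaves this implicit.
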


\begin{proof}
First we suppose $F(z)=\frac{f(l_{u}(z))}{l_{u}(z)}z\in\mathcal{M}_{\alpha}(\mathbb{B})$. Then by Proposition \ref{pro-1}, we have $f\in\mathcal{M}_{\alpha}(\mathbb{U})$.
Now from \eqref{Lem2.2} and \eqref{Lem2.3}, we see that
\begin{align}\label{Th-2.1}
&(1-\alpha)\frac{||z||}{l_z\left([DF(z)]^{-1}F(z)\right)} +\alpha\frac{l_z\left([DF(z)]^{-1}(D^2 F(z)(z^2)+DF(z)z)\right)}{||z||}\\=& (1-\alpha)\frac{f'(l_{u}(z))l_{u}(z)}{f(l_{u}(z))}+\alpha\left(1+\frac{f''(l_u(z))l_u(z)}{f'(l_u(z))}\right)=p(l_{u}(z)),\nonumber
\end{align}	
where $p\in\mathcal{P}$.

\noindent Suppose the function $\tilde F(z)=\frac{\tilde f(l_{u}(z))}{l_{u}(z)}z$ is defined by
\begin{align}\label{Th-2.2}
\frac{\tilde f'(l_{u}(z))l_{u}(z)}{\tilde f(l_{u}(z))}=p(l_u(z)).	
\end{align}

\noindent Clearly $\tilde f\in\mathcal{S}^*(\mathbb{U})$. According to Lemma 2.1 \cite{Xu-He-Xu}, we can say that $\tilde F\in\mathcal{S}^*(\mathbb{B})$.
 Let $\xi=l_{u}(z)$. By using the identity
 \begin{align*}
 \frac{\xi \tilde f'(\xi)}{\tilde f(\xi)}=1+\xi\frac{d}{d\xi} \log \frac{\tilde f(\xi)}{\xi},
 \end{align*}
 we get from \eqref{Th-2.2} that
 \begin{align}\label{Th-2.3}
P(\xi)= \frac{\tilde f(\xi)}{\xi}=\exp\left(\int\limits_{0}^{\xi}\frac{p(t)-1}{t} {\rm d}t\right).
 \end{align}

 \noindent Now using \eqref{Th-2.2} and \eqref{Th-2.3} to \eqref{Th-2.1}, we deduce that
 \begin{align}\label{Th-2.4}
 	\frac{f(\xi)}{\xi}\left\lbrack\frac{\xi f'(\xi)}{f(\xi)}\right\rbrack^{\alpha}=\frac{\tilde f(\xi)}{\xi}=P(\xi).
 \end{align}

\noindent  Making the substitution
 \begin{align}\label{Th-2.5}
 \frac{f(\xi)}{\xi} = u^{\alpha}(\xi)
 \end{align}
 into \eqref{Th-2.4}, we obtain
 \begin{align}\label{Th-2.6}
 \frac{du}{d\xi}=-\frac{u}{\alpha \xi} + \frac{P^{1/\alpha}(\xi)}{\alpha \xi}.
 \end{align}

 \noindent Now from \eqref{Th-2.5} and \eqref{Th-2.6}, we get
 \begin{align*}
 f(\xi)= \left\lbrack \frac{1}{\alpha} \int_{0}^{\xi} P^{1/\alpha}(t) t^{1/\alpha-1}\, {\rm d}t \right\rbrack^{\alpha}=\left\lbrack \frac{1}{\alpha} \int_{0}^{\xi} \tilde f^{1/\alpha}(t) t^{-1}\, {\rm d}t \right\rbrack^{\alpha}.
 \end{align*}

 Conversely, let $\tilde F(z)=\frac{\tilde f(l_u(z))}{l_u(z)}z\in\mathcal{S}^*(\mathbb{B})$ and $F(z)=\frac{ f(l_u(z))}{l_u(z)}z$, where $f$ is given by \eqref{Th-2.0}. According to Lemma 2.1 \cite{Xu-He-Xu}, we have $\tilde f\in\mathcal{S}^*(\mathbb{U})$. Consequently \eqref{Th-2.2} holds, where $p\in\mathcal{P}$. Using \eqref{Th-2.0}, one can easily arrive at \eqref{Th-2.4}. Then by a simple computation, we can easily arrive at \eqref{Th-2.1}. Finally, we conclude that $F(z)=\frac{f(l_{u}(z))}{l_{u}(z)}z\in\mathcal{M}_{\alpha}(\mathbb{B})$.
\end{proof}

\section{{\bf Coefficient problems for $\alpha$-convex functions}}\label{Sec-3}

In the following result, we give estimates for the upper bound of the second Hankel determinant for $f\in\mathcal{M}_{\alpha}(\mathbb{B})$ when $\alpha\geq 0$.
\begin{theo}\label{Th-31}Let $f\in\mathcal{H}(\mathbb{B},\mathbb{C})$ with $f(0)=1$ and $F(z)=zf(z)\in\mathcal{M}_{\alpha}(\mathbb{B})$, $\alpha\geq 0$. Then for $z\in\mathbb{B}\backslash\{0\}$, we have
\begin{align}\label{Th31}
&\left|\frac{l_{z}(D^2F(0)(z^2))}{2!\|z\|^2}\frac{l_{z}(D^4F(0)(z^4))}{4!\|z\|^4}-\left(\frac{l_z(D^3F(0)(z^3))}{3!\|z\|^3}\right)^2\right|\\& \le
	\begin{cases}
		\dfrac{2(1 + 8\alpha + 3\alpha^2)}{(1 + 3\alpha)(2 + 15\alpha + 24\alpha^2 + 7\alpha^3)}, & \text{when } \alpha \in [0, 1], \\[10pt]
		\dfrac{3 + 35\alpha + 126\alpha^2 + 162\alpha^3 + 79\alpha^4 + 27\alpha^5}{(1 + 2\alpha)^2 (1 + 3\alpha)(3 + 26\alpha + 42\alpha^2 + 18\alpha^3 + 7\alpha^4)}, & \text{when } \alpha \in [1, \infty).\nonumber
	\end{cases}
\end{align}	
\end{theo}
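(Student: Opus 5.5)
The plan is to reduce the statement to Theorem D via the one-variable slice function used in Theorem \ref{Th-1} and Proposition \ref{pro-2}. Fix $z\in\mathbb{B}\backslash\{0\}$, put $z_0=z/\|z\|$, and define $g(\zeta)=f(\zeta z_0)\zeta$ for $\zeta\in\mathbb{U}$. Since $F(z)=zf(z)\in\mathcal{M}_{\alpha}(\mathbb{B})$, Proposition \ref{pro-2} gives $g\in\mathcal{M}_{\alpha}(\mathbb{U})$. By Theorem A (or Remark A), $g$ is univalent and normalized, with $g(0)=0$ and $g'(0)=f(0)=1$.

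Next, identify the Taylor coefficients of $g$ with the quantities in \eqref{Th31}. From \eqref{Lem1.14} we have
\[
g(\zeta)=\zeta+\sum_{k=2}^{\infty}\frac{D^{k-1}f(0)(z_0^{k-1})}{(k-1)!}\zeta^k ,
\]
so the coefficients are $a_k=D^{k-1}f(0)(z_0^{k-1})/(k-1)!$. Identity \eqref{Th-1y}, which comes from \eqref{Th-1x} together with $l_z(z)=\|z\|$, then gives
\[
\frac{l_z(D^kF(0)(z^k))}{k!\|z\|^k}=a_k,\qquad k=2,3,4.
\]
The one point to check is that $D^{k-1}f(0)(z^{k-1})$ is a scalar, so $l_z$ acts only on the vector factor $z$ in $D^kF(0)(z^k)=kD^{k-1}f(0)(z^{k-1})z$. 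Homogeneity then converts $z$ to $z_0$, and the powers of $\|z\|$ cancel.

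With this identification, the left side of \eqref{Th31} is exactly $|a_2a_4-a_3^2|$ for $g\in\mathcal{M}_\alpha(\mathbb{U})$. Applying Theorem D, whose inequality \eqref{TheoD1} is stated for all $\alpha\ge 0$, yields both cases of the bound immediately.

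There is no serious obstacle. The only care needed is the bookkeeping of indices between $D^kF(0)$ and $D^{k-1}f(0)$, and confirming that the hypothesis $F\in\mathcal{M}_\alpha(\mathbb{B})$ supplies the local biholomorphy used in Proposition \ref{pro-2}. That local biholomorphy is built into the definition of $\mathcal{M}_\alpha(\mathbb{B})$.

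For sharpness one would additionally use Proposition \ref{pro-1}. Take the extremal function $f^*\in\mathcal{M}_\alpha(\mathbb{U})$ from Theorem D, and set $F(z)=\frac{f^*(l_u(z))}{l_u(z)}z$ with $z=ru$. This mapping attains equality, since its slice coefficients coincide with those of $f^*$.
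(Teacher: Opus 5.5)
Your argument is correct and is essentially the paper's: both pass to the slice $g(\zeta)=\zeta f(\zeta z_0)$, whose $J_\alpha[g]$ is the paper's $H$, and transfer coefficients through \eqref{Th-1y}. The only difference is that you use Proposition \ref{pro-2} and Theorem D as black boxes, whereas the paper expands $H$ in its Carath\'eodory coefficients $p_1,p_2,p_3$, solves for $D^kf(0)(z_0^k)$, $k\le 3$, and then reruns the Sok\'o{\l}--Thomas argument. Your closing sharpness remark is not needed for the statement and is not justified as written: Theorem D as quoted asserts no extremal function, and the paper exhibits extremals only for $\alpha=0,1$; moreover, for $\alpha>1$ the Sok\'o{\l}--Thomas bound comes from a triangle-inequality estimate whose terms cannot all be aligned in phase, so it is not attained there.
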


\begin{proof}
By the given condition, we have $F\in\mathcal{M}_{\alpha}(\mathbb{B})$ and so \eqref{Lem1.1} holds. Let $z\in \mathbb{B}\backslash\{0\}$ be fixed and let $z_0=z/\|z\|$.
We consider the function $H:\mathbb{U}\to \mathbb{C}$ defined as in \eqref{Lem1.2}. Clearly $H\in\mathcal{H}(\mathbb{U})$ such that $H(0)=1$
and $\Re\{H(\zeta)\}>0$ for all $\zeta\in\mathbb{U}$. Assume that
\begin{align}\label{Th-3.1}
H(\zeta)=1+p_1\zeta+p_2\zeta^2+\cdots.
\end{align}

\noindent Now using \eqref{Lem1.14} and \eqref{Lem1.16} to \eqref{Lem1.6}, we get
\begin{align}\label{Th-3.2}
\displaystyle
	\frac{||\zeta z_0||}{l_{\zeta z_0}\left([DF(\zeta z_0)]^{-1}F(\zeta z_0)\right)}=\frac{1+\sum\limits_{k=1}^{\infty}\frac{k+1}{k!}D^k f(0)(z_0^k)\zeta^k}{1+\sum\limits_{k=1}^{\infty}\frac{1}{k!}D^k f(0)(z_0^k)\zeta^k}.
\end{align}

\noindent On the other hand, \eqref{Lem1.18} yields
\begin{align}\label{Th-3.3}
		\frac{l_{\zeta z_0}\left([D F(\zeta z_0)]^{-1}\phi(\zeta z_0)\right)}{||\zeta z_0||}=\frac{1+\sum\limits_{m=1}^{\infty}\frac{(m+1)^2}{m!}D^{m} f(0)(z^{m})\zeta^m}{1+\sum\limits_{k=1}^{\infty}\frac{k+1}{k!}D^k f(0)(z_0^k)\zeta^k}.
\end{align}

\noindent In view of \eqref{Lem1.3} and using \eqref{Th-3.2} and \eqref{Th-3.3}, we deduce from that
\begin{equation}\label{Th-3.4}
H(\zeta)=(1-\alpha)\frac{1+\sum\limits_{k=1}^{\infty}\frac{k+1}{k!}D^k f(0)(z_0^k)\zeta^k}{1+\sum\limits_{k=1}^{\infty}\frac{1}{k!}D^k f(0)(z_0^k)\zeta^k}+\alpha
\frac{1+\sum\limits_{k=1}^{\infty}\frac{(k+1)^2}{k!}D^{k} f(0)(z_0^k)\zeta^k}{1+\sum\limits_{k=1}^{\infty}\frac{k+1}{k!}D^k f(0)(z_0^k)\zeta^k}.
\end{equation}

\noindent Using \eqref{Th-3.1} and \eqref{Th-3.4}, we get
\begin{align}\label{Th-3.5}
&~\left(1+\sum\limits_{k=1}^{\infty}p_k\zeta^k\right)
\left(1+\sum\limits_{k=1}^{\infty}\frac{D^k f(0)(z_0^k)\zeta^k}{k!}\right)\left(1+\sum\limits_{k=1}^{\infty}\frac{k+1}{k!}D^k f(0)(z_0^k)\zeta^k\right)\nonumber
\\=&(1-\alpha)\left(1+\sum\limits_{k=1}^{\infty}\frac{k+1}{k!}D^k f(0)(z_0^k)\zeta^k\right)\left(1+\sum\limits_{k=1}^{\infty}\frac{k+1}{k!}D^k f(0)(z_0^k)\zeta^k\right)\nonumber
\\&+\alpha\left(1+\sum\limits_{k=1}^{\infty}\frac{1}{k!}D^k f(0)(z_0^k)\zeta^k\right)\left(1+\sum\limits_{k=1}^{\infty}\frac{(k+1)^2}{k!}D^{k} f(0)(z_0^k)\zeta^k\right).
\end{align}

\noindent Clearly from \eqref{Th-3.5}, we find that
\begin{align}\label{Th-3.6}	
Df(0)(z_0)=\frac{p_1}{\alpha+1},
\end{align}
\begin{align}\label{Th-3.7}
p_2+3p_1Df(0)(z_0)=(1+2\alpha)D^2f(0)(z_0^2)+2\left(Df(0)(z_0)\right)^2
\end{align}
and
\begin{align}\label{Th-3.8}
&p_3+3p_2Df(0)(z_0)+2p_1D^2f(0)(z_0^2)+2p_1\left(Df(0)(z_0)\right)^2\nonumber\\&=\left(\frac{1+3\alpha}{2}\right)D^3f(0)(z_0^3)+\left(\frac{7+\alpha}{2}\right)Df(0)(z_0)D^2f(0)(z_0^2).
\end{align}

\noindent Substituting the value of \eqref{Th-3.6} into \eqref{Th-3.7}, we obtain
\begin{align}\label{Th-3.9}
D^2f(0)(z_0^2)=\frac{(1+\alpha)^2p_2+(1+3\alpha)p_1^2}{(1+2\alpha)(1+\alpha)^2}.
\end{align}

\noindent Again using \eqref{Th-3.6}--\eqref{Th-3.9} into \eqref{Th-3.8}, we get
\begin{align}\label{Th-3.10}
\displaystyle
D^3f(0)(z_0^3)=\frac{2(1+\alpha)^3(1+2\alpha)p_3+(1+\alpha)^2(3+15\alpha)p_1p_2+(1+6\alpha+17\alpha^2)p_1^3}{(1+3\alpha)(1+2\alpha)(1+\alpha)^3}.
\end{align}

\noindent Since $F(z)=zf(z)$, by applying \eqref{Th-1y} into \eqref{Th-3.6}, \eqref{Th-3.9} and \eqref{Th-3.10} respectively, we obtain
\begin{align}\label{Th-3.11}
	&\frac{l_{z}(D^4F(0)(z^4))}{4!\|z\|^4}=\frac{2(1+\alpha)^3(1+2\alpha)p_3+(1+\alpha)^2(3+15\alpha)p_1p_2+(1+6\alpha+17\alpha^2)p_1^3}{6(1+3\alpha)(1+2\alpha)(1+\alpha)^3},
\end{align}
\begin{align}\label{Th-3.12}
	\frac{l_{z}(D^3F(0)(z^3))}{3!\|z\|^3}=\frac{(1+\alpha)^2p_2+(1+3\alpha)p_1^2}{2(1+2\alpha)(1+\alpha)^2},
\end{align}
and
\begin{align}\label{Th-3.13}
	\frac{l_z(D^2F(0)(z^2))}{2!\|z\|^2}=\frac{p_1}{\alpha+1}.
\end{align}

\noindent Using \eqref{Th-3.11}-\eqref{Th-3.13}, we find that
\begin{align*}
&\frac{l_{z}(D^2F(0)(z^2))}{2!\|z\|^2}\frac{l_{z}(D^4F(0)(z^4))}{4!\|z\|^4}-\left(\frac{l_z(D^3F(0)(z^3))}{3!\|z\|^3}\right)^2\\=&	
-\frac{(1 + 10\alpha + 13\alpha^2)p_1^4}{12(1 + \alpha)^3(1 + 2\alpha)^2(1 + 3\alpha)} + \frac{\alpha p_1^2 p_2}{2(1 + \alpha)(1 + 2\alpha)^2(1 + 3\alpha)} - \frac{p_2^2}{4(1 + 2\alpha)^2}\\& + \frac{p_1 p_3}{3(1 + \alpha)(1 + 3\alpha)}.
\end{align*}

\noindent Now proceeding in the same way as done in the proof of Theorem 1 \cite{Sokol-Thomas-2018}, we get the desired inequality \eqref{Th31}.
\end{proof}

As consequences of Theorem \ref{Th-31}, we obtain the following corollaries:
\begin{cor}\label{Cor-31}Let $f\in\mathcal{H}(\mathbb{B},\mathbb{C})$ with $f(0)=1$ and $F(z)=zf(z)\in\mathcal{M}_{\alpha}(\mathbb{B})$, $\alpha>0$. Then for $z\in\mathbb{B}\backslash\{0\}$, we have
\begin{align*}
\left|\frac{l_{z}(D^2F(0)(z^2))}{2!\|z\|^2}\right| \le \frac{2}{1+\alpha}.
\end{align*}
The inequality is sharp.	
\end{cor}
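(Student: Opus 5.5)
The plan is to reuse the slice function from the proof of Theorem~\ref{Th-1} and Theorem~\ref{Th-31}. Fix $z\in\mathbb{B}\backslash\{0\}$ and put $z_0=z/\|z\|$. The function $H$ defined in \eqref{Lem1.2} is holomorphic on $\mathbb{U}$, satisfies $H(0)=1$ and $\Re\{H(\zeta)\}>0$, so $H\in\mathcal{P}$. Write $H(\zeta)=1+p_1\zeta+\cdots$ as in \eqref{Th-3.1}. Comparing the first-order coefficients in \eqref{Th-3.5} gives \eqref{Th-3.6}, namely $Df(0)(z_0)=p_1/(1+\alpha)$. Then \eqref{Th-1y} with $m=2$ yields \eqref{Th-3.13}:
\[
\frac{l_z(D^2F(0)(z^2))}{2!\|z\|^2}=\frac{p_1}{1+\alpha}.
\]
The Carath\'eodory bound $|p_1|\le 2$ for functions in $\mathcal{P}$ then gives the inequality $\le 2/(1+\alpha)$ at once. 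Alternatively, one can use Proposition~\ref{pro-2}: the slice $g(\zeta)=f(\zeta z_0)\zeta$ lies in $\mathcal{M}_\alpha(\mathbb{U})$. Its second coefficient is $Df(0)(z_0)$, and this equals the quantity in the statement, so Theorem C gives the same bound.

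For sharpness, I would use Proposition~\ref{pro-1}. Fix $u\in\partial\mathbb{B}$ and $l_u\in T(u)$, and take $f=K(\alpha,\cdot)$, the $\alpha$-convex Koebe function \eqref{TheoB2}. It belongs to $\mathcal{M}_\alpha(\mathbb{U})$ by Theorem B applied with the starlike Koebe function. Write its expansion as $K(\alpha,\xi)=\xi+a_2\xi^2+\cdots$. A short expansion of \eqref{TheoB2} gives the integrand $\frac{1}{\alpha}\xi^{1/\alpha-1}\bigl(1+\frac{2}{\alpha}\xi+\cdots\bigr)$. Integrating, the bracket becomes $\xi^{1/\alpha}\bigl(1+\frac{2}{1+\alpha}\xi+\cdots\bigr)$, and raising to the power $\alpha$ gives $a_2=2\alpha/(1+\alpha)\cdot\frac{1}{\alpha}\cdot\ldots$. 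Rather than push this expansion through, I would read $a_2$ off from $J_\alpha[K]=\frac{1+\xi}{1-\xi}$, so that $p_1=2$ and hence $a_2=2/(1+\alpha)$.

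Now let $F(z)=\frac{K(\alpha,l_u(z))}{l_u(z)}z\in\mathcal{M}_\alpha(\mathbb{B})$. Then $F(z)=f_1(z)z$ with $f_1(z)=1+a_2l_u(z)+\cdots$, so $D^2F(0)(z^2)=2a_2l_u(z)z$ by \eqref{Th-1x}. Take $z=ru$ with $0<r<1$. Then $l_z=l_u$, $l_u(z)=r$, and the quotient $\frac{l_z(D^2F(0)(z^2))}{2\|z\|^2}$ equals $a_2=2/(1+\alpha)$, so equality is attained.

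The only step needing care is this sharpness verification. One must check that $l_u\in T(ru)$, which holds because $l_u(ru)=r=\|ru\|$ and $\|l_u\|=1$. One must also check that the coefficient of $K(\alpha,\cdot)$ is indeed $2/(1+\alpha)$. For the latter, the cleanest route is to note that $H$ for this $F$ along $z_0=u$ is exactly $\frac{1+\zeta}{1-\zeta}$, via \eqref{Lem2.2} and \eqref{Lem2.3}, so $p_1=2$.
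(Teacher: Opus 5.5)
Your proposal is correct and follows the paper's proof. The bound is \eqref{Th-3.13} combined with $|p_1|\le 2$, and sharpness comes from the $\alpha$-convex Koebe function transplanted by Proposition~\ref{pro-1} and evaluated at $z=ru$. The only deviation is that you obtain $a_2=2/(1+\alpha)$ from $J_\alpha[K(\alpha,\cdot)](\xi)=\frac{1+\xi}{1-\xi}$ together with $a_2=p_1/(1+\alpha)$, rather than from the hypergeometric form \eqref{TheoB3}, which is cleaner. Your abandoned direct expansion has a slip: the bracket is $\xi^{1/\alpha}\bigl(1+\frac{2}{\alpha(1+\alpha)}\xi+\cdots\bigr)$, which does give $a_2=2/(1+\alpha)$ after raising to the power $\alpha$, but you never rely on that line.
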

\begin{proof}[\bf Proof of the Corollary \ref{Cor-31}] It is clear from \eqref{Th-3.13} that
\begin{align*}
\frac{l_z(D^2F(0)(z^2))}{2!\|z\|^2}=\frac{p_1}{\alpha+1}.
\end{align*} 	

\noindent Since $|p_1|\leq 2$, we easily get the desired inequality.\vspace{2mm}

{\bf Sharpness part:} We have $F(z)=\frac{f(l_u(z))}{l_u(z)}z$, where
\begin{align*}
f(l_u(z))=\left\lbrack\frac{1}{\alpha}\int\limits_{0}^{l_u(z)}\frac{\xi^{1/\alpha}}{\xi(1-\xi)^{2/\alpha}}{\rm d}\xi \right\rbrack^{\alpha},\quad z\in\mathbb{B},\quad ||u||=1.
\end{align*}
Since $f\in\mathcal{M}_{\alpha}(\mathbb{U})$, by Proposition \ref{pro-1}, we have $F\in\mathcal{M}_{\alpha}(\mathbb{B})$.
Taking $z=ru\;(0<r<1)$, we get from \eqref{TheoB3} that
\begin{align*}
\frac{f(l_u(z))}{l_u(z)}=\left\lbrack {}_{2} F_{1}\left(\frac{1}{\alpha},\frac{2}{\alpha},\frac{1}{\alpha}+1;r\right)\right\rbrack^{\alpha},
\end{align*}
where ${}_{2} F_{1}$ is the Gaussian hypergeometric function. It is easy to obtain
\begin{align*}
\frac{f(l_u(z))}{l_u(z)}=1+(2/(1+\alpha))r+O(r^2).
\end{align*}

\noindent Clearly $\frac{D^{2}F(0)\left(z^{2}\right)}{2!}=(2/(1+\alpha))r z$. Since
$z=ru\;(0<r<1)$, we get
\begin{align*}
\left|\frac{l_z\left(D^{2}F(0)\left(z^{2}\right)\right)}{2!||z||^{}}\right|=\frac{2}{1+\alpha},\quad z\in\mathbb{B}\backslash\{0\}.
\end{align*}
\end{proof}

\begin{cor}\label{Cor-32}Let $f\in\mathcal{H}(\mathbb{B},\mathbb{C})$ with $f(0)=1$ and $F(z)=zf(z)\in\mathcal{M}_{0}(\mathbb{B})=\mathcal{S}^*(\mathbb{B})$. Then for $z\in\mathbb{B}\backslash\{0\}$, we have
\begin{align*}
\left|\frac{l_{z}(D^2F(0)(z^2))}{2!\|z\|^2}\frac{l_{z}(D^4F(0)(z^4))}{4!\|z\|^4}-\left(\frac{l_z(D^3F(0)(z^3))}{3!\|z\|^3}\right)^2\right|\leq 1.
\end{align*}
The inequality is sharp.	
\end{cor}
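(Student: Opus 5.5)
The plan is to obtain the inequality as the case $\alpha=0$ of Theorem \ref{Th-31}, and then to prove sharpness with an extremal mapping built from the Koebe function via Proposition \ref{pro-1}.

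\textbf{Upper bound.} Since $\mathcal{M}_0(\mathbb{B})=\mathcal{S}^*(\mathbb{B})$, the hypothesis $F(z)=zf(z)\in\mathcal{S}^*(\mathbb{B})$ means $F\in\mathcal{M}_0(\mathbb{B})$. Theorem \ref{Th-31} with $\alpha=0\in[0,1]$ then applies. Substituting $\alpha=0$ into the first branch of \eqref{Th31} gives
\[
\frac{2(1+0+0)}{(1+0)(2+0+0+0)}=1,
\]
which is the claimed bound. As a cross-check on the coefficient formulas, at $\alpha=0$ the expression in the proof of Theorem \ref{Th-31} reduces to $-\frac{p_1^4}{12}-\frac{p_2^2}{4}+\frac{p_1p_3}{3}$. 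This is exactly the Janteng--Halim--Darus functional for starlike functions, whose maximum modulus over $\mathcal{P}$ is $1$.

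\textbf{Sharpness.} Take the Koebe function $k(\xi)=\xi/(1-\xi)^2\in\mathcal{S}^*(\mathbb{U})=\mathcal{M}_0(\mathbb{U})$. Fix $u\in\mathbb{X}$ with $\|u\|=1$ and $l_u\in T(u)$, and set
\[
F(z)=\frac{k(l_u(z))}{l_u(z)}\,z=\frac{z}{(1-l_u(z))^2}.
\]
By Proposition \ref{pro-1} with $\alpha=0$, $F\in\mathcal{M}_0(\mathbb{B})=\mathcal{S}^*(\mathbb{B})$. Here $f(z)=(1-l_u(z))^{-2}=\sum_{n\ge0}(n+1)\,l_u(z)^n$. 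By \eqref{Th-1x}, this gives
\[
\frac{D^mF(0)(z^m)}{m!}=m\,l_u(z)^{m-1}z .
\]
Now choose $z=ru$ with $0<r<1$. Then $l_u(z)=r=\|z\|$, and since $z$ is a positive multiple of $u$ we may take $l_z=l_u$, so $l_z(z)=r$. Consequently
\[
\frac{l_z(D^mF(0)(z^m))}{m!\|z\|^m}=m .
\]
The Hankel expression therefore equals $2\cdot4-3^2=-1$, whose modulus is $1$, so the bound is attained.

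\textbf{Main obstacle.} The only point needing care is the choice of $l_z$ in the sharpness step. Since $z=ru$, we have $l_u(z)=\|z\|$ and $\|l_u\|=1$, so $l_u\in T(z)$. This lets us evaluate the normalized coefficients as $m$ without dependence on the geometry of $\mathbb{X}$.
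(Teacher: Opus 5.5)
Your proposal is correct and follows the paper's route. The bound is Theorem~\ref{Th-31} at $\alpha=0$, and sharpness comes from a mapping $\frac{f(l_u(z))}{l_u(z)}z$ evaluated at $z=ru$. The only difference is the extremal function: the paper takes $F(z)=z/(1-l_u(z)^2)$, whose normalized coefficients at $z=ru$ are $0,1,0$. Your Koebe-based $z/(1-l_u(z))^2$ gives $2,3,4$ and $|2\cdot 4-3^2|=1$, which works equally well. Also, since each $D^mF(0)(z^m)$ is a multiple of $z$, the value is the same for every $l_z\in T(z)$, so your concern about the choice of $l_z$ does not arise.
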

 To establish the sharpness of the bound of Corollary \ref{Cor-32}, let us consider the function $F$ defined by
\begin{align*}
	F(z)=zf(z)=\frac{z}{1-l_u(z)^2},\quad z\in\mathbb{B},\quad \|u\|=1.
\end{align*}

\noindent According to Lemma 2.2 \cite{Xu-He-Xu}, it is clear that $F\in\mathcal{S}^*(\mathbb{B})$. Note that $f(z)=\sum_{n=0}^{\infty}(l_u(z))^{2n}$, $z\in\mathbb{B}$.
Clearly \eqref{Th-1y} yields
\begin{align*}
	\frac{l_z(D^2F(0)(z^2))}{2!}=0,\;\;
	\frac{l_z(D^3F(0)(z^3))}{3!}=(l_u(z))^2\|z\|\;\;\text{and}\;\;
	\frac{l_z(D^4F(0)(z^4))}{4!}=0.
\end{align*}

\noindent Taking $z=ru\;(0<r<1)$, we get
\begin{align*}
	\frac{l_z(D^2F(0)(z^2))}{2!||z||^2}=0,\;\;
	\frac{l_z(D^3F(0)(z^3))}{3!||z||^3}=1\quad \text{and}\quad
	\frac{l_z(D^4F(0)(z^4))}{4!||z||^4}=0.
\end{align*}

\noindent Therefore,
\begin{align*}
	\left|\frac{l_{z}(D^2F(0)(z^2))}{2!\|z\|^2}\frac{l_{z}(D^4F(0)(z^4))}{4!\|z\|^4}-\left(\frac{l_z(D^3F(0)(z^3))}{3!\|z\|^3}\right)^2\right|=1.
\end{align*}

 \begin{cor}\label{Cor-33}Let $f\in\mathcal{H}(\mathbb{B},\mathbb{C})$ with $f(0)=1$ and $F(z)=zf(z)\in\mathcal{M}_{1}(\mathbb{B})=\mathcal{K}_B(\mathbb{B})$. Then for $z\in\mathbb{B}\backslash\{0\}$, we have
 \begin{align*}
 \left|\frac{l_{z}(D^2F(0)(z^2))}{2!\|z\|^2}\frac{l_{z}(D^4F(0)(z^4))}{4!\|z\|^4}-\left(\frac{l_z(D^3F(0)(z^3))}{3!\|z\|^3}\right)^2\right|\leq 1/8.
 \end{align*}
 The inequality is sharp.	
 \end{cor}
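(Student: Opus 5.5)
The bound itself should come directly from Theorem \ref{Th-31} with $\alpha=1$, which lies in both ranges of \eqref{Th31}. In the first branch I substitute $\alpha=1$:
\[
\frac{2(1+8+3)}{(1+3)(2+15+24+7)}=\frac{24}{4\cdot 48}=\frac18 .
\]
As a consistency check, the second branch gives the same value:
\[
\frac{3+35+126+162+79+27}{9\cdot 4\cdot(3+26+42+18+7)}=\frac{432}{36\cdot 96}=\frac18 .
\]
Since $\mathcal{M}_1(\mathbb{B})=\mathcal{K}_B(\mathbb{B})$, the hypothesis of Theorem \ref{Th-31} with $\alpha=1$ is exactly the hypothesis of the corollary, so the inequality follows for every $z\in\mathbb{B}\backslash\{0\}$.

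For sharpness I follow the model of Corollary \ref{Cor-32}. I take $F(z)=\frac{f(l_u(z))}{l_u(z)}z$ with $\|u\|=1$, $l_u\in T(u)$, and $f\in\mathcal{K}(\mathbb{U})=\mathcal{M}_1(\mathbb{U})$ an extremal function for the one-variable bound $H_2(2)\le 1/8$ of \cite{Janteng-Halim-Darus-2007}. Proposition \ref{pro-1} then gives $F\in\mathcal{M}_1(\mathbb{B})$.

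The natural candidate is the odd convex function
\[
f(\xi)=\frac12\log\frac{1+\xi}{1-\xi}=\xi+\frac{\xi^3}{3}+\frac{\xi^5}{5}+\cdots .
\]
It is convex because $1+\xi f''(\xi)/f'(\xi)=\frac{1+\xi^2}{1-\xi^2}$ has positive real part in $\mathbb{U}$. Its coefficients are $a_2=a_4=0$ and $a_3=1/3$, so $|a_2a_4-a_3^2|=1/9$. That is not $1/8$, so this candidate fails.

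I therefore need the true extremal for the convex class. In the Janteng–Halim–Darus method the extremum comes from $p_1=0$ and $|p_2|=2$, where $p$ is the Carathéodory function $1+zf''/f'$. This corresponds to $p(\xi)=\frac{1+\xi^2}{1-\xi^2}$, which is exactly the function above. The expression derived in the proof of Theorem \ref{Th-31} at $\alpha=1$ with $p_1=0$ reduces to $-p_2^2/36$, giving $4/36=1/9$. This again suggests $1/9$, not $1/8$.

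The main obstacle is therefore identifying an extremal function that actually attains $1/8$. Before committing, I would recheck the coefficient formula of Theorem \ref{Th-31} at $\alpha=1$, namely
\[
-\frac{24p_1^4}{12\cdot 8\cdot 9\cdot 4}+\frac{p_1^2p_2}{2\cdot 2\cdot 9\cdot 4}-\frac{p_2^2}{36}+\frac{p_1p_3}{24}.
\]
I would then use the standard representations $2p_2=p_1^2+(4-p_1^2)x$ and the corresponding formula for $p_3$, and maximize over $|x|\le 1$ and $p_1=c\in[0,2]$ to find where $1/8$ is attained. I expect the maximum to occur at $c=0$, but that gives $1/9$. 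If it instead occurs at an interior $c$, the extremal $p$ is an explicit combination of Möbius-type functions, and I would take $f$ from $1+\xi f''/f'=p$, transfer it to $F$ via Proposition \ref{pro-1}, and evaluate at $z=ru$ using \eqref{Th-1y}, exactly as in the sharpness part of Corollary \ref{Cor-31}.

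If the maximization confirms that the maximum is $1/9$, then the sharpness claim in the corollary is doubtful. In that case I would establish only the inequality $\le 1/8$, as a direct specialization of Theorem \ref{Th-31}, and flag that the constant-$1/8$ sharpness is inherited from the one-variable literature and needs verification there.
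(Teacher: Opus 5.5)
Your proof of the inequality is correct and matches the paper's: specialize Theorem \ref{Th-31} to $\alpha=1$, where both branches equal $1/8$. The gap is the sharpness claim. You leave it unproved and predict that it fails.

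Your guess that the extremum sits at $p_1=0$, $|p_2|=2$ is the starlike picture; it gives the value $1$ in Corollary \ref{Cor-32}. For $\alpha=1$ it is false. Carry out the maximization you describe: take $p_1=c\in[0,2]$, $2p_2=c^2+(4-c^2)x$ and $4p_3=c^3+2(4-c^2)cx-(4-c^2)cx^2+2(4-c^2)(1-|x|^2)y$ with $|x|,|y|\le 1$. Let $E$ be your displayed expression at $\alpha=1$. The $c^4$ terms cancel and
\[
144E=-p_1^4+p_1^2p_2-4p_2^2+6p_1p_3=(4-c^2)\Bigl[\tfrac32c^2x-\bigl(4+\tfrac{c^2}{2}\bigr)x^2+3c(1-|x|^2)y\Bigr].
\]
Since $4+\frac{c^2}{2}-3c=\frac12(2-c)(4-c)\ge 0$, the bracket has modulus at most $4+2c^2$. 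Hence $144|E|\le (4-c^2)(4+2c^2)=16+4c^2-2c^4$. This equals $16$ at $c=0$, which is your $1/9$. Its maximum on $[0,2]$, however, is $18$, attained at $c=1$, and this gives exactly $1/8$.

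Equality holds for $c=1$, $x=-1$, that is $p_1=1$, $p_2=-1$, $p_3=-2$. This corresponds to
\[
p(\xi)=\frac{1-\xi^2}{1-\xi+\xi^2}=\frac12\left(\frac{1+e^{i\pi/3}\xi}{1-e^{i\pi/3}\xi}+\frac{1+e^{-i\pi/3}\xi}{1-e^{-i\pi/3}\xi}\right)\in\mathcal{P}.
\]
The convex function with $1+\xi f_1''(\xi)/f_1'(\xi)=p(\xi)$ is
\[
f_1(\xi)=\int_0^{\xi}\frac{{\rm d}t}{1-t+t^2}=\xi+\frac{\xi^2}{2}-\frac{\xi^4}{4}-\frac{\xi^5}{5}+\cdots .
\]
So $a_2=\frac12$, $a_3=0$, $a_4=-\frac14$, and $|a_2a_4-a_3^2|=\frac18$.

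This is the paper's extremal. Its logarithmic formula is correct only if $\omega=e^{2\pi i/3}$, so that $(1+\omega\xi)(1+\omega^2\xi)=1-\xi+\xi^2$. With it, $F(z)=\frac{f_1(l_u(z))}{l_u(z)}z\in\mathcal{M}_1(\mathbb{B})$ by Proposition \ref{pro-1}. At $z=ru$, where $l_u\in T(z)$, formula \eqref{Th-1y} returns the coefficients of $f_1$, so the bound $1/8$ is attained.

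Without this function your proposal establishes only the inequality, and your suspicion that the sharpness claim is doubtful is unfounded.
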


\noindent To establish the sharpness of the bound of Corollary \ref{Cor-33}, let us consider the function $F$ defined by
\begin{align*}
	F(z)=zf(z)=\frac{f_1(l_u(z))}{l_u(z)}z,\quad z\in\mathbb{B},\quad \|u\|=1,
\end{align*}
where
\begin{align*}
	f_1(l_u(z))=\frac{1}{\omega^2-\omega}\log\frac{1+\omega^2l_u(z)}{1+\omega l_u(z)},
\end{align*}
with $\omega=(1+i\sqrt{3})/2$. Straightforward calculation yields
\begin{align*}
	1+\frac{zf_1''(z)}{f_1'(z)}=p(z)=\frac{1-z^2}{1-z+z^2}.
\end{align*}

\noindent Note that
\begin{align*}
	\Re\{p(z)\}=\frac{(1-|z|^2)(1+|z|^2-\Re\{z\})}{|1-z+z^2|^2}.
\end{align*}

\noindent Clearly $\Re\{p(z)\}>0$ for $z\in\mathbb{U}$ and so $f_1\in\mathcal{K}(\mathbb{U})$. According to Corollary 2 \cite{Rop-Su}, $F\in\mathcal{K}_B(\mathbb{B})$. On the other hand, we have
\begin{align*}f_1(z)=z+\frac{z^2}{2}-\frac{z^4}{4}-\frac{z^5}{5}+\frac{z^7}{7}+\frac{z^8}{8}+\cdots.
\end{align*}

\noindent Therefore,
\begin{align*}
	f(z)=\frac{f_1(l_u(z))}{l_u(z)}=1+\frac{l_u(z)}{2}-\frac{l_u(z)^3}{4}-\frac{l_u(z)^4}{5}+\frac{l_u(z)^6}{7}+\frac{l_u(z)^7}{8}+\cdots
\end{align*}
for $z\in\mathbb{B}$ and $||u||=1$.
Clearly \eqref{Th-1y} yields
\begin{align*}
	\frac{l_z(D^2F(0)(z^2))}{2!}=\frac{l_u(z)||z||}{2},\;\;
	\frac{l_z(D^3F(0)(z^3))}{3!}=0\;\;\text{and}\;\;
	\frac{l_z(D^4F(0)(z^4))}{4!}=\frac{l_u(z)^3||z||}{4}.
\end{align*}

\noindent Taking $z=ru\;(0<r<1)$, we get
\begin{align*}
	\frac{l_z(D^2F(0)(z^2))}{2!||z||^2}=\frac{1}{2},\;\;
	\frac{l_z(D^3F(0)(z^3))}{3!||z||^3}=0\;\;\text{and}\;\;
	\frac{l_z(D^4F(0)(z^4))}{4!||z||^4}=\frac{1}{4}.
\end{align*}

\noindent Therefore,
\begin{align*}
	\left|\frac{l_{z}(D^2F(0)(z^2))}{2!\|z\|^2}\frac{l_{z}(D^4F(0)(z^4))}{4!\|z\|^4}-\left(\frac{l_z(D^3F(0)(z^3))}{3!\|z\|^3}\right)^2\right|=\frac{1}{8},
\end{align*}
which establishes the sharpness of the bound.\vspace{2mm}

By applying these techniques, we demonstrate that a generalized version of Theorem \ref{Th-31} can be established:
\begin{theo}\label{Th-32}Let $f\in\mathcal{H}(\mathbb{B},\mathbb{C})$ with $f(0)=1$ and $F(z)=zf(z)\in\mathcal{S}(\mathbb{B})$, $\alpha\in\mathbb{R}$ and $0\leq \beta\leq 1$. If
\begin{align}\label{Th-30}
\displaystyle
\Re\left\lbrace (1-\alpha)\frac{||z||}{l_z\left([DF(z)]^{-1}F(z)\right)} +\alpha\frac{l_z\left([DF(z)]^{-1}\phi(z)\right)}{||z||}\right\rbrace>\beta,
\end{align}
$z\in\mathbb{B}\backslash\{0\}$ and $l_z\in T(z)$, where $\phi(z)=D^2 F(z)(z^2)+DF(z) z$, then for $z\in\mathbb{B}\backslash\{0\}$, we have
\begin{align}\label{Th-32.2}
&\left|\frac{l_{z}(D^2F(0)(z^2))}{2!\|z\|^2}\frac{l_{z}(D^4F(0)(z^4))}{4!\|z\|^4}-\left(\frac{l_z(D^3F(0)(z^3))}{3!\|z\|^3}\right)^2\right|\\& \le
\begin{cases}
	(1 - \beta)^2, & \text{if } \alpha = \beta = 0, \\[8pt]
	\dfrac{(1 - \beta)^2 (S_1 + S_2)}{(\alpha + \beta)(1 + 3\alpha) S_3}, & \text{if } 0 < \alpha + 2\beta \le 1, \\[12pt]
	\dfrac{(1 - \beta)^2 (S_4 + S_5)}{(1 + 2\alpha)^2 (1 + 3\alpha) S_6}, & \text{if } \alpha + 2\beta \ge 1,\nonumber
\end{cases}
\end{align}	
where $S_1,\ldots,S_6$ are defined in Theorem E.
\end{theo}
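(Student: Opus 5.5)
The plan is to reduce the statement to Theorem E by slicing along complex lines, exactly as in the proofs of Theorem \ref{Th-1} and Theorem \ref{Th-31}. Fix $z\in\mathbb{B}\backslash\{0\}$ and put $z_0=z/\|z\|$. Let $g(\zeta)=f(\zeta z_0)\zeta$ for $\zeta\in\mathbb{U}$. Since $F(z)=zf(z)$ is biholomorphic, $f(z)\neq 0$ and $f(z)+Df(z)z\neq 0$, so the quotients \eqref{Lem1.4} and \eqref{Lem1.5} are available. I would then define $H$ on $\mathbb{U}$ as in \eqref{Lem1.2}, with the same $\alpha$.

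By the homogeneity relation $l_{\zeta z_0}=\frac{|\zeta|}{\zeta}l_{z_0}$ one gets \eqref{Lem1.3}. Combining it with \eqref{Lem1.8} and \eqref{Lem1.19} gives
\[
H(\zeta)=(1-\alpha)\frac{\zeta g'(\zeta)}{g(\zeta)}+\alpha\left(1+\frac{\zeta g''(\zeta)}{g'(\zeta)}\right),\qquad \zeta\in\mathbb{U}.
\]
Hypothesis \eqref{Th-30}, evaluated at the points $\zeta z_0$ with $\zeta\neq 0$, gives $\Re H(\zeta)>\beta$. At $\zeta=0$ we have $H(0)=1>\beta$. So $g$ satisfies \eqref{TheoE1}.

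The next step is to check that $g\in\mathcal{S}(\mathbb{U})$, which Theorem E requires. Univalence of $g$ follows from that of $F$: $F(\zeta z_0)=g(\zeta)z_0$, so $g(\zeta_1)=g(\zeta_2)$ forces $F(\zeta_1z_0)=F(\zeta_2z_0)$ and hence $\zeta_1=\zeta_2$. The normalization $g(0)=0$, $g'(0)=f(0)=1$ is immediate. Theorem E then gives
\[
|b_2b_4-b_3^2|\le \text{(RHS of \eqref{TheoE2})}
\]
for the Taylor coefficients $b_n$ of $g$. Note that \eqref{TheoE1} does not by itself imply univalence when $\beta>0$, so univalence has to come from $F$ as above.

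Finally, I would identify the coefficients. From \eqref{Lem1.14}, $g(\zeta)=\zeta+\sum_{k\ge1}\frac{1}{k!}D^kf(0)(z_0^k)\zeta^{k+1}$, so $b_{m}=\frac{D^{m-1}f(0)(z_0^{m-1})}{(m-1)!}$. By \eqref{Th-1y} this equals $\frac{l_z(D^mF(0)(z^m))}{m!\|z\|^m}$ for $m=2,3,4$. Substituting these into the bound yields \eqref{Th-32.2}.

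The main obstacle I expect is the bookkeeping with $l_z$ versus $l_{z_0}$ in \eqref{Th-1y}: the quantity must be independent of the choice of $l_z\in T(z)$ and must agree with the slice coefficients. This holds because $D^mF(0)(z^m)$ is a scalar multiple of $z$ and $l_z(z)=\|z\|$. A second point to be careful about is the case $\alpha<0$: there \eqref{TheoE1} does not itself give univalence, so one must genuinely rely on the assumption $F\in\mathcal{S}(\mathbb{B})$.
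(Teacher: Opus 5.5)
Your proposal is correct and is essentially the paper's proof. The paper likewise fixes $z_0=z/\|z\|$, uses the function $H$ of \eqref{Lem1.2} (which is $J_{\alpha}[g]$ for $g(\zeta)=\zeta f(\zeta z_0)$) to get $\Re H>\beta$, and converts coefficients with \eqref{Th-1y}. The only difference is that the paper re-expresses the coefficients through the Carath\'eodory coefficients of $(H-\beta)/(1-\beta)$ and then reruns the Sok\'o{\l}--Thomas argument, whereas you apply Theorem E to $g$ as a black box after checking $g\in\mathcal{S}(\mathbb{U})$, which is cleaner.

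One side remark of yours is inaccurate, though harmless. For $\beta\ge 0$, condition \eqref{TheoE1} together with $g(\zeta)g'(\zeta)/\zeta\neq 0$ already makes $g$ starlike, hence univalent, for every real $\alpha$ by the Miller--Mocanu--Reade theorem (Theorem A). So getting univalence from $F$ is a convenience, not a necessity.
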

\begin{proof}[\bf Proof of Theorem \ref{Th-32}]
By the given condition, we have $F\in\mathcal{M}_{\alpha}(\mathbb{B})$ and so \eqref{Lem1.1} holds. Let $z\in \mathbb{B}\backslash\{0\}$ be fixed and let $z_0=z/\|z\|$.
We consider the function $H:\mathbb{U}\to \mathbb{C}$ defined as in \eqref{Lem1.2}. Clearly $H\in\mathcal{H}(\mathbb{U})$ such that $H(0)=1$
and $\Re\{H(\zeta)\}>\beta$ for all $\zeta\in\mathbb{U}$. Let
\begin{align*}
\tilde H(\zeta)=\frac{H(\zeta)-\beta}{1-\beta}.
\end{align*}
Clearly $\tilde H\in\mathcal{H}(\mathbb{U})$ such that $\tilde H(0)=1$
and $\Re\{\tilde H(\zeta)\}>\beta$ for all $\zeta\in\mathbb{U}$.	
Assume that
\begin{align*}
\displaystyle
\tilde H(\zeta)=1+p_1\zeta+p_2\zeta^2+\ldots
\end{align*}

\noindent Then from above, we have
\begin{align}\label{Th-32.3}
\displaystyle
H(\zeta)=1+(1-\beta)p_1\zeta+(1-\beta)p_2\zeta^2+\cdots.
\end{align}

\noindent Using \eqref{Th-3.4} and \eqref{Th-32.3}, we get
\begin{align}\label{Th-32.4}
	&\left(1+(1-\beta)p_1\zeta+(1-\beta)p_2\zeta^2+(1-\beta)p_3\zeta^3+\cdots\right)
	\left(1+\sum\limits_{k=1}^{\infty}\frac{1}{k!}D^k f(0)(z_0^k)\zeta^k\right)\times\nonumber\\&\left(1+\sum\limits_{k=1}^{\infty}\frac{k+1}{k!}D^k f(0)(z_0^k)\zeta^k\right)\nonumber\\=&
	(1-\alpha)\left(1+\sum\limits_{k=1}^{\infty}\frac{k+1}{k!}D^k f(0)(z_0^k)\zeta^k\right)\left(1+\sum\limits_{k=1}^{\infty}\frac{k+1}{k!}D^k f(0)(z_0^k)\zeta^k\right)\nonumber\\&+\alpha\left(1+\sum\limits_{k=1}^{\infty}\frac{1}{k!}D^k f(0)(z_0^k)\zeta^k\right)\left(1+\sum\limits_{k=1}^{\infty}\frac{(k+1)^2}{k!}D^{k} f(0)(z_0^k)\zeta^k\right).
\end{align}

\noindent Clearly from \eqref{Th-32.4}, we find that
\begin{align}\label{Th-4.6}	
	Df(0)(z_0)=\frac{(1-\beta)p_1}{(1+\alpha)},
\end{align}
\begin{align}\label{Th-4.7}
	(1-\beta)\left(p_2+3p_1Df(0)(z_0)\right)=(1+2\alpha)D^2f(0)(z_0^2)+2\left(Df(0)(z_0)\right)^2
\end{align}
and
\begin{align}\label{Th-4.8}
	&(1-\beta)\left(p_3+3p_2Df(0)(z_0)+2p_1D^2f(0)(z_0^2)+2p_1\left(Df(0)(z_0)\right)^2\right)\nonumber\\&=\left(\frac{1+3\alpha}{2}\right)D^3f(0)(z_0^3)+\left(\frac{7+\alpha}{2}\right)Df(0)(z_0)D^2f(0)(z_0^2).
\end{align}

\noindent Substituting the value of \eqref{Th-4.6} into \eqref{Th-4.7}, we obtain
\begin{align}\label{Th-4.9}
	D^2f(0)(z_0^2)=\frac{(1-\beta)\left((1+\alpha)^2p_2+(1-\beta)(1+3\alpha)p_1^2\right)}{(1+2\alpha)(1+\alpha)^2}.
\end{align}

\noindent Again using \eqref{Th-4.6}--\eqref{Th-4.9} into \eqref{Th-4.8}, we get
\begin{align}\label{Th-4.10}
	\displaystyle
	D^3f(0)(z_0^3)=&\frac{(1-\beta)}{(1+3\alpha)(1+2\alpha)(1+\alpha)^3}\big(2(1+\alpha)^3(1+2\alpha)p_3\\&+(1-\beta)(1+\alpha)^2(3+15\alpha)p_1p_2+(1-\beta)^2(1+6\alpha+17\alpha^2)p_1^3\big)\nonumber.
\end{align}

\noindent Since $F(z)=zf(z)$, by applying \eqref{Th-1y} into \eqref{Th-4.6}, \eqref{Th-4.9} and \eqref{Th-4.10} respectively, we obtain
\begin{align}\label{Th-4.11}
	\frac{l_{z}(D^4F(0)(z^4))}{4!\|z\|^4}=&\frac{(1-\beta)}{6(1+3\alpha)(1+2\alpha)(1+\alpha)^3}\big(2(1+\alpha)^3(1+2\alpha)p_3\\&+(1-\beta)(1+\alpha)^2(3+15\alpha)p_1p_2+(1-\beta)^2(1+6\alpha+17\alpha^2)p_1^3\big)\nonumber.
\end{align}
\begin{align}\label{Th-4.12}
	\frac{l_{z}(D^3F(0)(z^3))}{3!\|z\|^3}=\frac{(1-\beta)\left((1+\alpha)^2p_2+(1-\beta)(1+3\alpha)p_1^2\right)}{2(1+2\alpha)(1+\alpha)^2},
\end{align}
and
\begin{align}\label{Th-4.13}
	\frac{l_z(D^2F(0)(z^2))}{2!\|z\|^2}=\frac{(1-\beta)p_1}{(1+\alpha)}.
\end{align}

Using \eqref{Th-4.11}-\eqref{Th-4.13}, we find that
\begin{align*}
	&\frac{l_{z}(D^2F(0)(z^2))}{2!\|z\|^2}\frac{l_{z}(D^4F(0)(z^4))}{4!\|z\|^4}-\left(\frac{l_z(D^3F(0)(z^3))}{3!\|z\|^3}\right)^2\\=&	
		(1-\beta)^2\bigg[\frac{\left(2(1-\beta)^2(1+2\alpha)(1+6\alpha+17\alpha^2)-3(1+3\alpha)^2\right)}{12(1+3\alpha)(1+2\alpha)^2(1+\alpha)^4}p_1^4\\&+\frac{\left((1-\beta)(1+5\alpha)(1+2\alpha)-(1+3\alpha)^2 \right)p_1^2p_2}{2(1+3\alpha)(1+2\alpha)^2(1+\alpha)}-\frac{p_2^2}{4(1+2\alpha)^2}+\frac{p_1p_3}{3(1+3\alpha)(1+\alpha)}\bigg].
\end{align*}
Now proceeding in the same way as done in the proof of Theorem 2 \cite{Sokol-Thomas-2018}, we get the desired inequality \eqref{Th31}.
\end{proof}
\begin{rem}For $\alpha=0$, \eqref{Th-30} becomes $\mathcal{S}^*_{\beta}(\mathbb{B})$ and so from \eqref{Th-32.2}, we have
\begin{align*}
\left|\frac{l_{z}(D^2F(0)(z^2))}{2!\|z\|^2}\frac{l_{z}(D^4F(0)(z^4))}{4!\|z\|^4}-\left(\frac{l_z(D^3F(0)(z^3))}{3!\|z\|^3}\right)^2\right|\leq (1-\beta)^2,\quad z\in\mathbb{B}\backslash\{0\},
\end{align*} 	
which extends the result due to Lee et al. \cite{Lee-2013} to complex Banach spaces.
For $\alpha=1$, \eqref{Th-30} becomes $\mathcal{K}^{\beta}_{B}(\mathbb{B})$.
Consequently, from \eqref{Th-32.2}, we obtain
 \begin{align*}
 	\left|\frac{l_{z}(D^2F(0)(z^2))}{2!\|z\|^2}\frac{l_{z}(D^4F(0)(z^4))}{4!\|z\|^4}-\left(\frac{l_z(D^3F(0)(z^3))}{3!\|z\|^3}\right)^2\right|\leq (1 - \beta)^2 \frac{36 - 36\beta + 17\beta^2}{144(2 - 2\beta + \beta^2)},
 \end{align*}
$z\in\mathbb{B}\backslash\{0\}$, which is not sharp. This extends the result of Krishna and Ramreddy \cite{Krishna-Ramreddy-2012} to complex Banach spaces.
For $\beta=0$, the bounds \eqref{Th-32.2} become \eqref{Th31} in Theorem \ref{Th-31}.
\end{rem}

\vspace{5mm}

\noindent\textbf{Conflict of interest:} The authors declare that there is no conflict  of interest regarding the publication of this paper.\vspace{1.2mm}

\noindent {\bf Funding:} Not Applicable.\vspace{1.2mm}

\noindent\textbf{Data availability statement:}  Data sharing not applicable to this article as no datasets were generated or analysed during the current study.\vspace{1.2mm}


\end{document}